\newcommand{\Z}{{\mathbb{Z}}}
\newcommand{\N}{{\mathbb{N}}}
\newcommand{\bT}{{\mathbb{T}}}
\newcommand{\bH}{{\mathcal{H}}}
\newcommand{\cL}{{\mathcal{L}}}
\newcommand{\cR}{{\mathcal{R}}}
\newcommand{\cLR}{{\mathcal{LR}}}
\newcommand{\fC}{{\mathfrak{C}}}
\newcommand{\fI}{{\mathfrak{I}}}
\newcommand{\fS}{{\mathfrak{S}}}
\newcommand{\fs}{{\mathfrak{s}}}
\newcommand{\ft}{{\mathfrak{t}}}
\newcommand{\Irr}{{\operatorname{Irr}}}
\renewcommand{\leq}{\leqslant}
\renewcommand{\geq}{\geqslant}
\renewcommand{\atop}[2]{\genfrac{}{}{0pt}{}{#1}{#2}}
\newtheorem{thm}{Theorem}[section]
\newtheorem{lem}[thm]{Lemma}
\newtheorem{cor}[thm]{Corollary}
\newtheorem{prop}[thm]{Proposition}
\theoremstyle{definition}
\newtheorem{exmp}[thm]{Example}
\newtheorem{defn}[thm]{Definition}
\theoremstyle{remark}
\newtheorem{rem}[thm]{Remark}
\begin{document}

\title{Specht modules and Kazhdan--Lusztig cells in type $B_n$}
\author{Meinolf Geck, Lacrimioara Iancu and Christos Pallikaros}

\address{M.G. and L.I.: Department of Mathematical Sciences, King's College,
Aberdeen University, Aberdeen AB24 3UE, Scotland, U.K.}

\email{m.geck@maths.abdn.ac.uk}
\email{l.iancu@maths.abdn.ac.uk}

\address{C.P.: Department of Mathematics and Statistics, University of
Cyprus, P.O. Box 20537, 1678 Nicosia, Cyprus}
\email{pallikar@ucy.ac.cy}

\date{June, 2007}
\subjclass[2000]{Primary 20C08; Secondary 20G40}

\begin{abstract} 
Dipper, James and Murphy generalized the classical Specht module theory
to Hecke algebras of type $B_n$. On the other hand, for any choice of a
monomial order on the parameters in type $B_n$, we obtain corresponding
Kazhdan--Lusztig cell modules. In this paper, we show that the Specht
modules are naturally isomorphic to the Kazhdan--Lusztig cell modules 
{\em if} we choose the dominance order on the parameters, as in the 
``asymptotic case'' studied by Bonnaf\'e and the second named author. We 
also give examples which show that such an isomorphism does not exist for 
other choices of monomial orders.
\end{abstract}

\maketitle

\pagestyle{myheadings}
                                                                               
\markboth{Geck, Iancu and Pallikaros}{Specht modules and
Kazhdan--Lusztig cells}

\maketitle

\section{Introduction} \label{sec:intro}

Let $\bH_n$ be the generic Iwahori--Hecke algebra of type $A_{n-1}$ or $B_n$.
For any partition or bipartition $\lambda$ of $n$, we have a corresponding 
Specht module $\tilde{S}^\lambda$, as defined by Dipper--James \cite{DJ0} 
(in type $A_{n-1}$) and Dipper--James--Murphy \cite{DJM} (in type $B_n$).
On the other hand, we have the cell modules arising from the theory of 
Kazhdan--Lusztig cells; see Lusztig \cite{Lusztig83}, \cite{Lusztig03}. 
Now McDonough--Pallikaros \cite{McPa} showed 
that, in type $A_{n-1}$, the Specht modules and Kazhdan--Lusztig cell
modules are naturally isomorphic. The main purpose of this paper is to prove 
an analogous result for type $B_n$. Note that, contrary to the situation
in type $A_{n-1}$, there are many different types of Kazhdan--Lusztig cell 
modules in type $B_n$, depending on the choice of a monomial order on the 
two parameters in type $B_n$. We will show that it is precisely the 
``asymptotic case'' studied in \cite{BI} which yields an isomorphism with 
the Specht modules of Dipper--James--Murphy.  

In Theorem~\ref{thm1}, we show the existence of a canonical isomorphism 
between a Specht module and a Kazhdan--Lusztig left cell module in
the ``asymptotic case'' (where both of them are labelled by the 
appropriate bipartition of $n$). Both the Specht modules and the 
Kazhdan--Lusztig cells have certain standard bases. We show that, for 
a suitable ordering of these bases, the matrix of the canonical 
isomorphism is triangular with $1$ on the diagonal.
Our proof essentially relies on 
the combinatorial description \cite{BI} of the left cells in the
``asymptotic case''. This allows us to determine explicitly (in terms of
reduced expressions of elements) certain distinguished left cells 
for every bipartition of $n$; see Proposition~\ref{prop22}.

In Section~\ref{sec:not}, we give examples which show that the Specht 
modules are {\em not} isomorphic to Kazhdan--Lusztig cell modules
for choices of the monomial order which are different from the 
``asymptotic case''.

\section{Kazhdan--Lusztig bases and cells} \label{sec1}
In this section, we recall the basic definitions concerning Kazhdan--Lusztig
bases and cells, following Lusztig \cite{Lusztig83}, \cite{Lusztig03}. 
We also recall some of the main results of \cite{BI}, \cite{BI2},
\cite{Ge05} concerning the ``asymptotic case'' in type $B_n$. This will
allow us, see Proposition~\ref{prop22}, to describe explicit reduced
expressions for the elements in certain distinguished left cells in type 
$B_n$.

\subsection{Basic definitions} \label{basisdef} $\mbox{}$

In \cite{Lusztig03}, an Iwahori--Hecke algebra with possibly unequal
parameters is defined with respect to an  integer-valued weight function on
$W$. Following a suggestion of Bonnaf\'e \cite{BI2}, we can slightly modify 
Lusztig's definition so as to include the more general setting in 
\cite{Lusztig83} as well. 

Let $\Gamma$ be an abelian group (written 
additively) and let $A={\Z}[\Gamma]$ be the free abelian group with basis 
$\{\varepsilon^\gamma \mid \gamma \in \Gamma\}$. There is a well-defined 
ring structure on $A$ such that $\varepsilon^\gamma \varepsilon^{\gamma'}=
\varepsilon^{\gamma+ \gamma'}$ for all $\gamma,\gamma' \in \Gamma$.  (Hence, 
if $\Gamma=\Z$, then $A$ is nothing but the ring of Laurent polynomials in 
an indeterminate~$\varepsilon$.) We write $1=\varepsilon^0 \in A$.  Given 
$a\in A$ we denote by $a_\gamma$ the coefficient of $\varepsilon^\gamma$, 
so that $a= \sum_{\gamma \in \Gamma} a_\gamma \varepsilon^\gamma$. We say 
that a function 
\[ L \colon W\rightarrow\Gamma\] 
is a weight function if $L(ww')= L(w)+L(w')$ whenever we have $\ell(ww')=
\ell(w)+\ell(w')$ where $\ell\colon W\rightarrow {\N}$ is the usual length 
function. (We denote $\N=\{0,1,2,\ldots\}$.)  
Let $\bH=\bH(W,S,L)$ be the generic Iwahori--Hecke algebra over $A$ with 
parameters $\{v_s \mid s\in S\}$ where $v_s:=\varepsilon^{L(s)}$ for 
$s\in S$. The algebra $\bH$ is free over $A$ with basis $\{T_w\mid w \in
W\}$, and the multiplication is given by the rule
\[ T_sT_w=\left\{\begin{array}{cl} T_{sw} & \quad \mbox{if $\ell(sw)>\ell
(w)$},\\
T_{sw}+(v_s-v_s^{-1})T_w & \quad \mbox{if $\ell(sw)<\ell(w)$},\end{array}
\right.\]
where $s\in S$ and $w\in W$. 

Now assume that there is a total order $\leq$ on $\Gamma$ compatible with 
the group structure. (In the setting of \cite{Lusztig03}, $\Gamma=\Z$ with 
the natural order.) The following definitions will depend on the
choice of this total order. We denote by $A_{\geq 0}$ the set of $\Z$-linear 
combinations of elements $\varepsilon^{\gamma}$ where  $\gamma\geq 0$. 
Similarly, we define $A_{>0}$, $A_{\leq 0}$ and $A_{<0}$. We assume
throughout that $L(s)>0$ for all $s\in S$. 
Having fixed a total order on $\Gamma$, we have a corresponding 
Kazhdan--Lusztig basis $\{C_w \mid w\in W\}$ of $\bH$. The element $C_w$ is 
self-dual with respect to a certain ring involution of $\bH$, and we have
\[ C_w=T_w+\sum_{\atop{y \in W}{y < w}} P_{y,w}^{\,*} \,T_y \in \bH, \]
where $<$ denotes the Bruhat--Chevalley order on $W$ and $P_{y,w}^*\in
A_{<0}$ for all $y<w$ in $W$; see \cite[\S 6]{Lusztig83}. (In the framework 
of \cite{Lusztig03}, the polynomials $P_{y,w}^*$ are denoted $p_{y,w}$ and 
the basis elements $C_w$ are denoted $c_w$.) Given $x,y\in W$, we write 
\[ C_x \,C_y=\sum_{z\in W} h_{x,y,z}\, C_z \qquad \mbox{where 
$h_{x,y,z} \in A$}.\]
We have the following more explicit multiplication rules (see 
\cite[\S 6]{Lusztig83}): for $w \in W$ and $s \in S$, we have
\[ T_s\,C_w = \left\{\begin{array}{ll} \displaystyle{C_{sw}-v_s^{-1}
C_w+\sum_{\atop{z<w}{sz<z}} M_{z,w}^s\, C_z} &\quad \mbox{if
$sw>w$},\\v_s\,C_w &\quad \mbox{if $sw<w$},\end{array}\right.\]
where the elements $M_{z,w}^s \in A$ are determined as in 
\cite[\S 3]{Lusztig83}.

We recall the definition of the left cells of $W$ and the corresponding 
left cell representations of $\bH$ (see \cite{Lusztig83} or \cite{Lusztig03}).
Note again that these depend on the choice of a total order on $\Gamma$.

We write $z \leftarrow_{\cL} y$ if there exists some $s\in S$ such that 
$h_{s,y,z} \neq 0$, that is, $C_z$ occurs in $C_s\, C_y$ (when 
expressed in the $C$-basis).  
Let $\leq_{\cL}$ be the pre-order 
relation on $W$ generated by $\leftarrow_{\cL}$, that is, we have $z 
\leq_{\cL} y$ if there exist elements $z=z_0, z_1,\ldots,z_k=y$ such that 
$z_{i-1} \leftarrow_{\cL} z_i$ for $1\leq i\leq k$.  The equivalence
relation associated with $\leq_{\cL}$ will be denoted by $\sim_{\cL}$ and 
the corresponding equivalence classes are called the {\em left cells} of $W$. 

Similarly, we can define a pre-order $\leq_{\cR}$ by considering
multiplication by $C_s$ on the right in the defining relation. The 
equivalence relation associated with $\leq_{\cR}$ will be denoted by 
$\sim_{\cR}$ and the corresponding equivalence classes are called the 
{\em right cells} of $W$.  We have
\[ x \leq_{\cR} y \quad \Leftrightarrow \quad x^{-1} \leq_{\cL} y^{-1}.\]
This follows by using the anti-automorphism $\flat\colon \bH\rightarrow
\bH$ given by $T_w^\flat=T_{w^{-1}}$; we have $C_w^\flat=C_{w^{-1}}$ ; 
see \cite[5.6]{Lusztig03}. Thus, 
any statement concerning the left pre-order relation $\leq_{\cL}$ has an 
equivalent version for the right pre-order relation $\leq_{\cR}$, via 
$\flat$. 

Finally, we define a pre-order $\leq_{\cLR}$ by the condition 
that $x\leq_{\cLR} y$ if there exists a sequence $x=x_0,x_1,\ldots, x_k=y$ 
such that, for each $i \in \{1,\ldots,k\}$, we have $x_{i-1} \leq_{\cL} x_i$ 
or $x_{i-1}\leq_{\cR}x_i$. The equivalence relation associated with 
$\leq_{\cLR}$ will be denoted by $\sim_{\cLR}$ and the corresponding 
equivalence classes are called the {\em two-sided cells} of $W$.

Each left cell $\fC$ gives rise to a representation of~$\bH$. This is 
constructed as follows (see \cite[\S 7]{Lusztig83}). Let
\begin{align*}
\fI_\fC &=\langle C_y \;(y \in W)\mid  y \leq_{\cL} w \mbox{ for some 
$w \in \fC$}\rangle_A,\\
\hat{\fI}_{\fC} &=\langle C_y \;(y \in W)\mid y \leq_{\cL} w \mbox{ for 
some $w \in \fC$ and } y \not\in\fC\rangle_A.
\end{align*}
These are left ideals in $\bH$. Hence $[\fC]_A=\fI_{\fC}/\hat{\fI}_{\fC}$
is a left $\bH$-module; it is free over $A$ with basis $\{e_w\mid w\in
\fC\}$ where $e_w$ denotes the class of $C_w$ modulo $\hat{\fI}_{\fC}$. 
Explicitly, the action of $\bH$ on $[\fC]_A$ is given by 
\[ C_w.e_x = \sum_{y\in \fC} h_{w,x,y} \, e_y \qquad \mbox{for 
all $x\in \fC$ and $w\in W$}.\]

\subsection{The ``asymptotic case'' in type $B_n$} \label{subbn} $\mbox{}$

Now let $\Gamma=\Z^2$; then $A= {\Z}[\Gamma]$ is nothing but the ring of
Laurent polynomials in two independent indeterminates $V=\varepsilon^{(1,0)}$
and $v=\varepsilon^{(0,1)}$. Let $W=W_n$ be the Coxeter group of type $B_n$ 
($n \geq 2$), with generators, relations and weight function $L \colon W_n 
\rightarrow \Gamma$ given by the following diagram:
\begin{center}
\begin{picture}(250,50)
\put(  3, 25){$B_n$}
\put(  4, 05){$L$ :}
\put( 40, 25){\circle{10}}
\put( 44, 22){\line(1,0){33}}
\put( 44, 28){\line(1,0){33}}
\put( 81, 25){\circle{10}}
\put( 86, 25){\line(1,0){29}}
\put(120, 25){\circle{10}}
\put(125, 25){\line(1,0){20}}
\put(155, 22){$\cdot$}
\put(165, 22){$\cdot$}
\put(175, 22){$\cdot$}
\put(185, 25){\line(1,0){20}}
\put(210, 25){\circle{10}}
\put( 37, 37){$t$}
\put( 36, 05){$b$}
\put( 76, 37){$s_1$}
\put( 78, 05){$a$}
\put(116, 37){$s_2$}
\put(118, 05){$a$}
\put(203, 37){$s_{n-1}$}
\put(208, 05){$a$}
\end{picture}
\end{center}
where $a,b \in \Gamma$. Let $\bH_n$ be the corresponding generic 
two-parameter Iwahori--Hecke algebra over $A={\Z}[\Gamma]$, where we set
\[ V:=v_t=\varepsilon^b \qquad \mbox{and}\qquad v:=v_{s_1}=\cdots =
v_{s_{n-1}}=\varepsilon^a.\]
(Note that {\it any} Hecke algebra of type $B_n$ can be obtained from 
$\bH_n$ by ``specialisation''; see also Remark~\ref{rem3} below.)
In order to obtain Kazhdan--Lusztig cells and the corresponding cell modules,
we have to specify a total order $\leq$ on $\Gamma$. Note that there are 
infinitely many such total orders: For example, we have all the weighted 
lexicographic orders, given by $(i,j)<(i',j')$ if and only if $xi+yj<xi'+yj'$
or $xi+yj= xi'+yj'$ and $i<i'$, where $x,y$ are fixed positive real numbers. 

Here, we shall take for $\leq$ the lexicographic order on $\Gamma$ such that 
\begin{center}
\fbox{$(i,j)<(i',j')\qquad \stackrel{\text{def}}{\Longleftrightarrow} \qquad
i<i'  \quad \mbox{or} \quad  i=i' \mbox{ and } j<j'$.}
\end{center}
This is the set-up originally considered by Bonnaf\'e--Iancu \cite{BI}; 
it is called the {\em ``asymptotic case''} in type $B_n$. 
We shall need some notation from \cite{BI}. Given $w\in W_n$, we denote by 
$\ell_t(w)$ the number of occurrences of the generator $t$ in a reduced 
expression for $w$, and call this the ``$t$-length'' of $w$. 

The parabolic subgroup $\fS_n=\langle s_1,\ldots,s_{n-1}\rangle$ is
naturally isomorphic to the symmetric group on $\{1,\ldots,n\}$, where
$s_i$ corresponds to the basic transposition $(i,i+1)$. For $1 \leq l
\leq n-1$, we set $\Sigma_{l,n-l}:=\{s_1,\ldots,s_{n-1}\}\setminus
\{s_l\}$.  For $l=0$ or $l=n$, we also set $\Sigma_{0,n}=\Sigma_{n,0}=
\{s_1,\ldots,s_{n-1}\}$.  Then we have the Young subgroup
\[ \fS_{l,n-l}=\langle \Sigma_{l,n-l}\rangle=\fS_{\{1,\ldots,l\}} \times
\fS_{\{l+1,\ldots,n\}}.\]
Let $Y_{l,n-l}$ be the set of distinguished left coset representatives of 
$\fS_{l,n-l}$ in $\fS_n$. We have the parabolic subalgebra $\bH_{l,n-l}=
\langle T_\sigma \mid \sigma \in \fS_{l,n-l}\rangle_A \subseteq \bH_n$.

We denote by $\leq_{\cL,l}$ the Kazhdan--Lusztig (left) pre-order relation
on $\fS_{l,n-l}$ and by $\sim_{\cL,l}$ the corresponding equivalence
relation. The symbols $\leq_{\cR,l}$, $\leq_{\cLR,l}$, $\sim_{\cR,l}$
and $\sim_{\cLR,l}$ have a similar meaning. 

Furthermore, as in \cite[\S 4]{BI}, we set $a_0=1$ and
\[ a_l:=t(s_1t)(s_2s_1t) \cdots (s_{l-1}s_{l-2} \cdots s_1t) \qquad
\mbox{for $l>0$}.\]
Then, by \cite[Prop.~4.4]{BI}, the set $Y_{l,n-l}a_l$ is precisely the 
set of distinguished left coset representatives of $\fS_n$ in $W_n$ whose 
$t$-length equals $l$. Furthermore, every element $w\in W_n$ has a unique 
decomposition
\[w=a_wa_l\sigma_w b_w^{-1} \qquad \mbox{where $l=\ell_t(w)$, $\sigma_w \in
\fS_{l,n-l}$ and $a_w,b_w\in Y_{l,n-l}$};\]
see \cite[4.6]{BI}.  We call this the {\em Clifford normal form} of $w$.

\begin{thm}[Bonnaf\'e--Iancu \protect{\cite[\S 7]{BI}}] \label{mainbi} 
Assume that we are in the ``asymptotic case'' defined above. Let $x,y\in 
W_n$. Then $x\sim_{\cL} y$ if and only if $l:=\ell_t(x)= \ell_t(y)$, 
$b_x=b_y$ and $\sigma_x \sim_{\cL,l} \sigma_y$. 
\end{thm}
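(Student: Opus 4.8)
The plan is to prove both directions by reducing the left-cell relation on $W_n$ to the left-cell relation on the parabolic subgroup $\fS_{l,n-l}$, using the Clifford normal form $w=a_wa_l\sigma_wb_w^{-1}$ together with the defining feature of the asymptotic case, namely that $b=L(t)$ exceeds every $\Z$-multiple of $a=L(s_i)$ in the total order on $\Gamma$. The three conditions $\ell_t(x)=\ell_t(y)$, $b_x=b_y$ and $\sigma_x\sim_{\cL,l}\sigma_y$ will be shown to be exactly the invariants of a left cell, and conversely any two elements sharing them will be linked by $\sim_{\cL}$.

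First I would show that $\ell_t$ is constant on left cells. Since multiplication by a generator does not alter the number of occurrences of $t$ in a reduced word, any change of $t$-length along the relation $z\leftarrow_{\cL}y$ would have to come from the lower terms $M_{z,w}^s$ and the structure constants $h_{s,y,z}$. Using that $b$ dominates every $\Z$-multiple of $a$, one sees that these coefficients can only connect $y$ to elements $z$ with $\ell_t(z)\le\ell_t(y)$, and the same holds on the right via $\flat$; hence $\ell_t$ is even a two-sided-cell invariant, and $x\sim_{\cL}y$ forces $\ell_t(x)=\ell_t(y)=:l$.

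With $l$ fixed, I would use \cite[Prop.~4.4]{BI} and the decomposition \cite[4.6]{BI} to isolate the right-hand factor $b_w^{-1}$. The key computation is to express $C_w$, modulo the span of the $C_{w'}$ with $\ell_t(w')<l$, as a product whose rightmost factor is governed by $b_w$ and whose remaining part is controlled by $a_wa_l\sigma_w$; the asymptotic hypothesis is what guarantees that no cross-terms mixing different values of $b_w$ survive. Because $\leftarrow_{\cL}$ is defined through left multiplication by $C_s$, this rightmost factor is preserved, giving the condition $b_x=b_y$ and reducing the problem, within the stratum of fixed $l$ and fixed $b_w$, to the elements $a_wa_l\sigma_w$.

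It remains to identify the residual relation with $\sim_{\cL,l}$. Here I would appeal to the compatibility of Kazhdan--Lusztig left cells with the parabolic subalgebra $\bH_{l,n-l}$ (induction of cells): once $l$ and $b_w$ are fixed, the passage $\sigma\mapsto a_l\sigma$ transports $\leq_{\cL,l}$ on $\fS_{l,n-l}$ to the restriction of $\leq_{\cL}$, while the distinguished representative $a_w\in Y_{l,n-l}$ is free to range over the whole coset without leaving the cell. Combining the three invariants then yields the stated equivalence in both directions. I expect the genuine difficulty to lie in the factorization of the previous paragraph: controlling the polynomials $M_{z,w}^s$ finely enough to prove that the leading term of $C_sC_w$ never leaves the stratum of fixed $t$-length and fixed $b_w$ is exactly the place where the asymptotic condition on $b$ and $a$ is indispensable, whereas the rest is bookkeeping with the Clifford normal form.
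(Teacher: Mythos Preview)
The present paper does not prove this theorem; it is quoted from Bonnaf\'e--Iancu \cite[\S 7]{BI} and used as a black box, so there is no proof here against which to compare your proposal.

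That said, your outline does follow the broad architecture of \cite{BI}: first show that $\ell_t$ is a left-cell invariant, then pin down the right factor $b_w$ via the Clifford normal form, and finally reduce to the Kazhdan--Lusztig theory of the parabolic subgroup $\fS_{l,n-l}$. One imprecision worth flagging: your assertion that ``multiplication by a generator does not alter the number of occurrences of $t$ in a reduced word'' is false for the generator $t$ itself, so the constancy of $\ell_t$ cannot be read off from the leading term of $C_sC_w$ alone. The actual argument in \cite{BI} establishes a monotonicity of $\ell_t$ along $\leq_{\cL}$ by bounding the degrees (in $V$) of the polynomials $M_{z,w}^s$ and $P_{y,w}^*$, and this is precisely where the asymptotic hypothesis enters. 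Your second and third steps are accurate in spirit: the factorization of $C_w$ modulo elements of smaller $t$-length in terms of the Clifford components is indeed the technical core of \cite[\S\S 6--7]{BI}, and the transport of cells between $\fS_{l,n-l}$ and the stratum of fixed $l$ and $b_w$ is carried out there via an explicit comparison of structure constants rather than a general induction-of-cells principle.
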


\begin{exmp} \label{exp1} Let $l \in \{0,\ldots, n\}$ and $\fC$ be a left
cell of $\fS_{l,n-l}$. Since this group is a direct product, we can write 
$\fC=\fC_1\cdot \fC_2$ where $\fC_1$ is a left cell in $\fS_{\{1,\ldots,l\}}$
and $\fC_2$ is a left cell in $\fS_{\{l+1,\ldots,n\}}$. Now 
Theorem~\ref{mainbi} implies that 
\begin{equation*}
Y_{l,n-l}\, a_l\, \fC\; \mbox{ is a left cell of $W_n$ (in the ``asymptotic
case'')}.\tag{a}
\end{equation*}
Now recall from \cite[4.1]{BI} that $a_l=w_l\sigma_l$ where $w_l$ is
the longest element of the parabolic subgroup $W_l=\langle t,s_1,\ldots,
s_{l-1}\rangle$ (of type $B_l$) and $\sigma_l$ is the longest element
of $\fS_{\{1,\ldots,l\}}$. Since $w_l$ is  central in $W_l$ and 
conjugation with $\sigma_l$ preserves the left cells of $\fS_{\{1,\ldots,
l\}}$, we conclude that $a_l\fC_1 a_l$ is a left cell of $\fS_{\{1,\ldots,
l\}}$, too. Furthermore, $a_l$ commutes with all elements of $\fS_{\{l+1,
\ldots,n\}}$ and so $a_l \fC a_l$ is a left cell of $\fS_{l,n-l}$. 
Applying (a) now yields that 
\begin{equation*}
Y_{l,n-l}\, \fC\,a_l\; \mbox{ is a left cell of $W_n$ (in the ``asymptotic
case'')}.\tag{b}
\end{equation*}
This example will be useful in the proof of Proposition~\ref{prop22} below.
\end{exmp}

\subsection{Bitableaux} \label{bitab} $\mbox{}$

Let $\Lambda_n$ be the set of all bipartitions of $n$. We write such a 
bipartition in the form $\lambda=(\lambda_1|\lambda_2)$ where $\lambda_1$ 
and $\lambda_2$ are partitions such that $|\lambda_1|+ |\lambda_2|=n$. 
For $\lambda \in \Lambda_n$, let $\bT(\lambda)$ be the set of all 
standard $\lambda$-bitableaux. (Whenever we speak of bitableaux, it is 
understood that the filling is by the numbers $1,\ldots,n$.) The 
generalized Robinson--Schensted correspondence of \cite{BI} is a bijection
\[ W_n \stackrel{\sim}{\longrightarrow} \coprod_{\lambda \in \Lambda_n}
\bT(\lambda) \times \bT(\lambda), \qquad w \mapsto (P(w),Q(w)).\]
Thus, to each element $w\in W_n$, we associate a pair of $\lambda$-bitableux 
for some $\lambda \in \Lambda_n$; in this case, we also write $w \leadsto 
\lambda$ and say that $w$ is type $\lambda$. 

The following result provides an explicit combinatorial description of the 
left, right and two-sided cells in the ``asymptotic case'' in type $B_n$.

\begin{thm} \label{mainbi2} Assume we are in the ``asymptotic case'' 
defined in \S \ref{subbn}.  Let $x,y\in W_n$. 
\begin{itemize}
\item[(a)] {\rm (Bonnaf\'e--Iancu \protect{\cite[\S 7]{BI}})} We have 
$x\sim_{\cL} y$ if and only if $Q(x)=Q(y)$. Furthermore, $x \sim_{\cR} y$
if and only if $P(x)=P(y)$.
\item[(b)] {\rm (Bonnaf\'e \protect{\cite[\S3 ]{BI2}})} We have $x 
\sim_{\cLR} y$ if and only if all of $P(x)$, $P(y)$, $Q(x)$ and $Q(y)$ 
have the same shape.
\end{itemize}
\end{thm}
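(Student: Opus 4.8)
The plan is to reduce the theorem to the combinatorial statements about cells already available, primarily Theorem~\ref{mainbi} on left cells and the generalized Robinson--Schensted correspondence. The statement to prove (Theorem~\ref{mainbi2}) asserts that, in the asymptotic case, the left, right and two-sided cells of $W_n$ are governed by the $Q$-tableau, the $P$-tableau, and the common shape of $P,Q$, respectively. Since the first two assertions (part (a)) are attributed to Bonnaf\'e--Iancu and the third (part (b)) to Bonnaf\'e, the natural approach is to show how each follows from Theorem~\ref{mainbi} together with the classical theory of the Robinson--Schensted correspondence for the symmetric group.

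First I would handle the left cell assertion in part (a). Writing $x$ and $y$ in Clifford normal form $x=a_xa_l\sigma_x b_x^{-1}$ and analogously for $y$, Theorem~\ref{mainbi} tells us that $x\sim_{\cL}y$ exactly when $\ell_t(x)=\ell_t(y)=l$, $b_x=b_y$, and $\sigma_x\sim_{\cL,l}\sigma_y$ in the Young subgroup $\fS_{l,n-l}$. The key is to verify that the generalized Robinson--Schensted map of \cite{BI} has been built so that $Q(x)$ records precisely this data: the recording tableau should encode the $t$-length $l$ (which fixes the shapes of the two components of the bitableau), the coset representative $b_x$, and the recording tableau of $\sigma_x$ under the ordinary Robinson--Schensted correspondence inside $\fS_{l,n-l}=\fS_{\{1,\ldots,l\}}\times\fS_{\{l+1,\ldots,n\}}$. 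Invoking the classical fact that $\sigma\sim_{\cL,l}\sigma'$ in a symmetric group iff they share the same recording tableau, one then concludes that $x\sim_{\cL}y\Leftrightarrow Q(x)=Q(y)$. The right cell statement follows by applying the anti-automorphism $\flat$: since $C_w^\flat=C_{w^{-1}}$ gives $x\leq_{\cR}y\Leftrightarrow x^{-1}\leq_{\cL}y^{-1}$, and since the correspondence satisfies $P(w)=Q(w^{-1})$ and $Q(w)=P(w^{-1})$ (the symmetry property of generalized Robinson--Schensted), the left cell criterion transports to $x\sim_{\cR}y\Leftrightarrow P(x)=P(y)$.

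For part (b), I would argue that two elements lie in the same two-sided cell iff the four tableaux $P(x),P(y),Q(x),Q(y)$ share a common shape $\lambda\in\Lambda_n$. One inclusion is the easier direction: if $x\sim_{\cLR}y$ then they are connected by a chain of $\leq_{\cL}$ and $\leq_{\cR}$ steps, and one must check that each such step preserves the shape; this uses that $\leq_{\cL}$ and $\leq_{\cR}$ refine the shape order and that cells within one two-sided cell all have the same shape. The reverse inclusion---that any two elements of type $\lambda$ are $\sim_{\cLR}$-equivalent---requires showing that the union over all $P,Q$ of the same shape forms a single two-sided cell. Here I would combine the left-cell description from part (a) with the right-cell description: by varying $Q$ within fixed shape one moves through all left cells of that shape, and by varying $P$ one connects the corresponding right cells, so the whole block of shape $\lambda$ is $\leq_{\cLR}$-connected.

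The hard part will be establishing the reverse inclusion in part (b), namely that all elements of a fixed shape $\lambda$ actually form a \emph{single} two-sided cell rather than a union of several. This is where the genuine content of Bonnaf\'e's argument \cite{BI2} lies: one must produce, for any two bitableaux-pairs of shape $\lambda$, an explicit chain of left and right cell relations connecting them, which typically rests on transitivity properties of the generalized Robinson--Schensted insertion and on a careful analysis of how $\leq_{\cLR}$ interacts with the Clifford normal form. The forward inclusion, by contrast, should be essentially formal once the shape-monotonicity of the pre-orders is recorded, and the entirety of part (a) reduces cleanly to Theorem~\ref{mainbi} plus the classical symmetric-group theory.
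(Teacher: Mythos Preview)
The paper does not prove Theorem~\ref{mainbi2} at all: it is stated as a quotation of results established elsewhere, with part~(a) attributed to Bonnaf\'e--Iancu \cite[\S 7]{BI} and part~(b) to Bonnaf\'e \cite[\S 3]{BI2}, and no argument is given in the present paper. So there is no ``paper's own proof'' to compare your proposal against.

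That said, your sketch is a reasonable outline of how those cited proofs proceed. Part~(a) is indeed essentially a reformulation of Theorem~\ref{mainbi} via the construction of the generalized Robinson--Schensted correspondence in \cite{BI}, together with the classical Kazhdan--Lusztig theory for symmetric groups and the symmetry $P(w^{-1})=Q(w)$. For part~(b), you correctly identify the nontrivial direction and correctly locate the substance in Bonnaf\'e's paper \cite{BI2}; your description of the strategy (connect elements of the same shape by alternating left and right cell moves) is accurate in spirit, though the actual argument in \cite{BI2} involves more: it relies on explicit computations with the elements $a_l$ and the Kazhdan--Lusztig basis (notably the factorisation $C_{\sigma a_l}=C_\sigma C_{a_l}$) rather than purely combinatorial manipulations of bitableaux. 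One caution: the forward direction of (b) is not quite as ``formal'' as you suggest, since shape-monotonicity under $\leq_{\cLR}$ is itself part of what needs to be established in the asymptotic case and is not available a priori.
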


Now let $\fC$ be a left cell of $W_n$. We shall say that $\fC$ is of
type $\lambda \in \Lambda_n$ if the bitableaux $Q(x)$ (where $x \in \fC$)
have shape $\lambda$.

\begin{thm}[Geck \protect{\cite[Theorem~6.3]{Ge05}}] \label{equal} 
Let $\fC$ and $\fC'$ be left cells of $W_n$ (in the ``asymptotic case'')
which have the same type $\lambda \in \Lambda_n$. Then the left cell 
modules $[\fC]_A$ and $[\fC']_A$ are canonically isomorphic. In fact, there 
is a bijection $\fC \leftrightarrow \fC'$ which induces an $\bH_n$-module 
isomorphism $[\fC]_A \stackrel{\sim}{\longrightarrow} [\fC']_A$.
\end{thm}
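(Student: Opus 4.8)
The plan is to produce a bijection $\gamma\colon\fC\to\fC'$ satisfying
\[ h_{w,x,y}=h_{w,\gamma(x),\gamma(y)}\qquad\text{for all }w\in W_n\text{ and }x,y\in\fC; \]
once this holds, the formula $C_w.e_x=\sum_{y\in\fC}h_{w,x,y}\,e_y$ shows at once that $e_x\mapsto e_{\gamma(x)}$ is an $\bH_n$-module isomorphism $[\fC]_A\stackrel{\sim}{\to}[\fC']_A$. So the whole problem is to find such a $\gamma$ and to verify the compatibility of structure constants.

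By Theorem~\ref{mainbi2}(a) a left cell is precisely a fibre of the map $x\mapsto Q(x)$; thus $\fC=\{x\in W_n\mid Q(x)=Q_0\}$ and $\fC'=\{x\in W_n\mid Q(x)=Q_0'\}$ for standard $\lambda$-bitableaux $Q_0,Q_0'$. Fixing $Q$, the generalized Robinson--Schensted map restricts to a bijection $\fC\stackrel{\sim}{\to}\bT(\lambda)$, $x\mapsto P(x)$, and likewise for $\fC'$. I would therefore take for $\gamma$ the canonical bijection determined by $P(\gamma(x))=P(x)$ and $Q(\gamma(x))=Q_0'$. The content of the theorem then becomes the assertion that, for $x,y$ lying in one left cell (so that $Q(x)=Q(y)$), the structure constant $h_{w,x,y}$ depends only on $w$, $P(x)$ and $P(y)$, and not on the common value of $Q$ beyond its shape $\lambda$.

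To prove this $Q$-independence I would use the Clifford normal form $x=a_xa_l\sigma_xb_x^{-1}$ of \S\ref{subbn} together with Theorem~\ref{mainbi}. Since the type $\lambda=(\lambda_1|\lambda_2)$ fixes $l=\ell_t(x)=|\lambda_1|$, passing from $Q_0$ to $Q_0'$ amounts to (i) changing the left cell $\fu$ of $\fS_{l,n-l}$ through which $\sigma_x$ runs, keeping $b_x$ fixed, and (ii) changing the coset representative $b_x\in Y_{l,n-l}$, keeping $\fu$ fixed. For move (i) I would invoke the type-$A$ version of the statement inside $\fS_{l,n-l}=\fS_{\{1,\ldots,l\}}\times\fS_{\{l+1,\ldots,n\}}$: left cells of a symmetric group of equal shape have isomorphic cell modules realised by the same-$P$-tableau bijection $\delta\colon\fu\to\fu'$, and this $\delta$ is exactly the restriction of $\gamma$ to the $\sigma$-part. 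For move (ii), the factor $b_x^{-1}$ sits on the right and is untouched by left multiplication by the $C_w$, so one expects $h_{w,x,y}$ to be insensitive to $b_x$; rigorously I would transport this across the anti-automorphism $\flat$ using $x\leq_{\cR}y\Leftrightarrow x^{-1}\leq_{\cL}y^{-1}$ and the description of right cells by $P$ in Theorem~\ref{mainbi2}(a). Example~\ref{exp1}, which exhibits $Y_{l,n-l}\,a_l\,\fu$ and $Y_{l,n-l}\,\fu\,a_l$ as honest left cells of $W_n$, is precisely the combinatorial input that makes these two moves manipulable.

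The main obstacle is move (i): one must show that the left Kazhdan--Lusztig structure constants of $\bH_n$, restricted to a left cell of type $\lambda$, genuinely reduce to those of the parabolic subalgebra $\bH_{l,n-l}$, independently of the coset data $a_x,a_l,b_x$. This is the technical heart and is exactly where the \emph{asymptotic} choice of order is indispensable: only then do the Bonnaf\'e--Iancu multiplication formulas force the contributions attached to the generator $t$ (and to the leading coset factors) to reduce to common scalar factors shared by $x$ and $\gamma(x)$. The cleanest route I can see is to prove directly that $[\fC]_A$ is isomorphic to the induced module $\mathrm{Ind}_{\bH_{l,n-l}}^{\bH_n}[\fu]_A$, with the induction realised on the explicit coset representatives $Y_{l,n-l}\,a_l\,b^{-1}$ furnished by \S\ref{subbn}; both the type-$A$ invariance of move (i) and the $b$-independence of move (ii) would then follow from the corresponding, and more elementary, properties of the parabolic cell module $[\fu]_A$.
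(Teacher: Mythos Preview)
The paper does not prove this theorem; it is quoted from \cite[Theorem~6.3]{Ge05} and used as a black box. There is therefore no proof in the present paper to compare your proposal against.

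As for the proposal itself: the bijection $\gamma$ via matching $P$-tableaux is indeed the correct one, and your reformulation --- that for $x,y$ in a common left cell the constant $h_{w,x,y}$ depends only on $w$, $P(x)$, $P(y)$ and the shape $\lambda$ --- is exactly what has to be established. But you have not established it. You correctly isolate move~(i) as the technical heart and then propose to handle it by proving $[\fC]_A\cong\mathrm{Ind}_{\bH_{l,n-l}}^{\bH_n}[\fu]_A$, without carrying this out. That is a genuine gap. The asymptotic multiplication formulas of \cite{BI}, \cite{BI2} do give strong control over products $C_sC_w$, but turning this into the statement that the $\bH_n$-structure constants on $\fC$ coincide with the $\bH_{l,n-l}$-structure constants on $\fu$ (uniformly in the coset data $a_x$, $b_x$) is exactly the nontrivial content of the theorem, and your sketch does not supply it. Move~(ii) is also not as innocent as you suggest: the factor $b_x^{-1}$ sits on the right, but the $M^s_{z,w}$ terms in the multiplication rule for $T_sC_w$ can in principle interact with it, and ``transport via $\flat$'' converts the question into one about right cells without obviously trivialising it.

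For orientation: the argument in \cite{Ge05} does not proceed by the direct reduction you outline. It goes through a theory of \emph{relative} Kazhdan--Lusztig cells and relies on verifying a number of Lusztig's conjectures \textbf{P1}--\textbf{P15} in the asymptotic case; the independence of the cell module from the particular left cell within a two-sided cell then falls out of that framework. Your induced-module route may well be viable in this setting, but as written it is a plan rather than a proof.
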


The above results show that, in order to study the left cell modules of 
$\bH_n$, it is sufficient to exhibit one particular left cell of type 
$\lambda$, for each given $\lambda \in \Lambda_n$. For this purpose, we 
shall need some further combinatorial notions from Dipper--James--Murphy 
\cite[\S 3]{DJM}. 

So let us fix a bipartition $\lambda=(\lambda_1|\lambda_2) \in \Lambda_n$, 
where $l=|\lambda_1|$ and $0 \leq l \leq n$. Let $\ft^\lambda$ be the 
``canonical'' standard bitableau of shape $\lambda$ defined in 
\cite[p.~508]{DJM}.  Thus, $\ft^\lambda$ is a pair consisting of the 
``canonical'' $\lambda_1$-tableau $\ft^{\lambda_1}$ (obtaining by filling 
the rows in order from left to right by the numbers $1,\ldots,l$) and the 
``canonical'' $\lambda_2$-tableau $\ft'^{\lambda_2}$ (obtained by filling 
the rows in order from left to right by the numbers $l+1,\ldots,n$).

The symmetric group $\fS_n$ acts (on the left) on bitableaux by permuting 
the entries. If $\ft$ is any bitableau of shape $\lambda$, denote by $d(\ft)$
the unique element of $\fS_n$ which sends $\ft^\lambda$ to $\ft$. Thus, we 
have $d(\ft).\ft^\lambda=\ft$ for any $\lambda$-bitableau $\ft$. Now let
$\bT^r(\lambda)$ denote the set of all row-standard $\lambda$-bitableaux.
Then 
\[ Y^\lambda:=\{ d(\ft) \mid \ft\in \bT^r(\lambda)\}\]
is the set of distinguished left coset representatives of the parabolic
subgroup $\fS_\lambda$ in $\fS_{n}$; see \cite[p.~509]{DJM}.  Applying
this to the bipartition $((l),(n-l))$, we find that
\[ Y_{l,n-l}=Y^{((l),(n-l))}.\]
Now we also define $\bT^r_l(\lambda)$ to be the set of all $\ft=(\ft_1|
\ft_2)\in \bT^r(\lambda)$ where $\ft_1$ is filled by the numbers $1,\ldots,l$ 
and $\ft_2$ is filled by the numbers $l+1,\ldots,n$. Then, by the same
argument as above,
\[ Y^\lambda_l:=\{ d(\ft) \mid \ft\in \bT^r_l(\lambda)\}\]
is the set of distinguished left coset representatives of the parabolic
subgroup $\fS_\lambda$ inside $\fS_{l,n-l}$. Hence, considering the chain of
parabolic subgroups $\fS_\lambda \subseteq \fS_{l,n-l}\subseteq \fS_n$,
we obtain a decomposition
\[ Y^\lambda=Y_{l,n-l} \cdot Y^\lambda_l\]
where $\ell(yd(\ft))=\ell(y)+\ell(d(\ft))$ for all $y \in Y_{l,n-l}$ 
and $\ft \in \bT^r_l(\lambda)$.

Now we have the following purely combinatorial result.

\begin{lem} \label{lem21} In the above setting, let $\fs\in 
\bT^r(\lambda)$, $\ft\in \bT_l^r(\lambda)$ and $y \in Y_{l,n-l}$ be such 
that $d(\fs)=y\,d(\ft)$. Then $\fs$ is a standard bitableau if and only if
$\ft$ is a standard bitableau.
\end{lem}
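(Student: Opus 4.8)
The plan is to rewrite the hypothesis $d(\fs)=y\,d(\ft)$ as an equality of bitableaux and then to reduce the assertion to the elementary fact that an order-preserving relabeling of the entries of a (bi)tableau preserves standardness. First I would apply both sides of $d(\fs)=y\,d(\ft)$ to $\ft^\lambda$. Since $d(\fs).\ft^\lambda=\fs$ and $d(\ft).\ft^\lambda=\ft$, this yields $\fs=y.\ft$, where $\fS_n$ acts on bitableaux by replacing each entry $i$ with $y(i)$. Writing $\ft=(\ft_1|\ft_2)$ and $\fs=(\fs_1|\fs_2)$, the assumption $\ft\in\bT^r_l(\lambda)$ means that $\ft_1$ is filled by $\{1,\ldots,l\}$ and $\ft_2$ by $\{l+1,\ldots,n\}$, so the entries of the two components are transformed separately: $\fs_1=y.\ft_1$ and $\fs_2=y.\ft_2$.

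The next step is to record the defining property of $Y_{l,n-l}$. Because $Y_{l,n-l}=Y^{((l),(n-l))}$ consists precisely of those $y$ for which $y.\ft^{((l),(n-l))}$ is row-standard, such a $y$ must satisfy $y(1)<\cdots<y(l)$ and $y(l+1)<\cdots<y(n)$; that is, $y$ restricts to an order-preserving bijection from $\{1,\ldots,l\}$ onto $y(\{1,\ldots,l\})$ and from $\{l+1,\ldots,n\}$ onto its complement in $\{1,\ldots,n\}$. Consequently, passing from $\ft_1$ to $\fs_1$ (and from $\ft_2$ to $\fs_2$) amounts to relabeling the entries by an order-preserving bijection, leaving the underlying shape and the positions of the entries unchanged.

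I would then invoke the key observation that an order-preserving relabeling of the entries preserves every strict inequality between the numbers occupying any two cells, and hence preserves both the row-increasing and the column-increasing conditions. Therefore $\fs_1$ is standard if and only if $\ft_1$ is standard, and likewise $\fs_2$ is standard if and only if $\ft_2$ is standard. Since a bitableau is standard exactly when both of its components are standard, combining these two equivalences gives that $\fs=(\fs_1|\fs_2)$ is standard if and only if $\ft=(\ft_1|\ft_2)$ is standard, which is the claim.

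The argument is short, and the only point requiring care—the closest thing to an obstacle—is the characterization of the elements of $Y_{l,n-l}$ as the permutations that are increasing on each of the two blocks $\{1,\ldots,l\}$ and $\{l+1,\ldots,n\}$. This, however, is immediate from the identity $Y_{l,n-l}=Y^{((l),(n-l))}$ together with the description of $d(\cdot)$ recalled just above the statement, so that the remainder reduces to the routine verification that order-preserving relabeling respects standardness.
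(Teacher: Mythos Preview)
Your proposal is correct and follows essentially the same approach as the paper: the paper also reduces to $\fs=y.\ft$, records that $y\in Y_{l,n-l}$ is increasing on each of the two blocks $\{1,\ldots,l\}$ and $\{l+1,\ldots,n\}$, and then declares the remaining verification an ``easy combinatorial exercise''. You have simply supplied that exercise in full by making explicit that an order-preserving relabeling on each component preserves the row- and column-increasing conditions.
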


\begin{proof} We have $\fs=d(\fs).\ft^\lambda=(yd(\ft)).\ft^\lambda=
y.(d(\ft).\ft^\lambda)=y.\ft$. The permutation $y \in Y_{l,n-l}$ has
the property that $y(i)<y(i+1)$ for $1 \leq i<l$  and $y(i)<y(i+1)$
for $l \leq i <n$. Now it is an easy combinatorial exercise to see that
$\fs$ is standard if and only if $\ft$ is standard; we omit further 
details. 
\end{proof}

\begin{prop} \label{prop22} Let $\lambda=(\lambda_1|\lambda_2) \in \Lambda_n$
and $l=|\lambda_1|$. Let $\sigma_\lambda \in \fS_\lambda$ be the longest
element and $\fC_\lambda$ be the left cell (with respect to the ``asymptotic 
case'') containing $\sigma_\lambda a_l \in W_n$. Then $\fC_\lambda$ has
type $(\lambda_2^*|\lambda_1)$ and we have 
\[\fC_\lambda=\{d(\ft)\,\sigma_\lambda\,a_l\mid\ft\in\bT(\lambda)\},\]
where $\ell(d(\ft) \sigma_\lambda a_l)=\ell(d(\ft))+\ell(\sigma_\lambda
a_l)$ for all $\ft \in \bT(\lambda)$.
\end{prop}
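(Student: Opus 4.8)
The plan is to transport the statement to the symmetric group, where the left cell through the longest element of a Young subgroup is classically understood, and then to read off the $B_n$-picture using Example~\ref{exp1}. First I would let $\fC$ denote the left cell of $\fS_{l,n-l}$ containing $\sigma_\lambda$. Since $1\in Y_{l,n-l}$ and $\sigma_\lambda\in\fC$, the element $\sigma_\lambda a_l=1\cdot\sigma_\lambda\cdot a_l$ lies in $Y_{l,n-l}\,\fC\,a_l$, and by Example~\ref{exp1}(b) the latter set is a left cell of $W_n$. As left cells partition $W_n$, this identifies $\fC_\lambda=Y_{l,n-l}\,\fC\,a_l$.

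Next I would make $\fC$ explicit and carry out the multiplication. Writing $\fS_{l,n-l}=\fS_{\{1,\ldots,l\}}\times\fS_{\{l+1,\ldots,n\}}$ and $\sigma_\lambda=\sigma_{\lambda_1}\sigma_{\lambda_2}$, where $\sigma_{\lambda_i}$ is the longest element of the relevant factor of $\fS_\lambda$, the cell $\fC$ factors as $\fC_1\cdot\fC_2$ with $\fC_i$ the left cell through $\sigma_{\lambda_i}$. Here I would invoke the classical type $A$ description (as in \cite{McPa}): the left cell of $\fS_m$ through the longest element $w_\mu$ of a Young subgroup $\fS_\mu$ is $\{d(\ft)\,w_\mu\mid \ft\text{ a standard }\mu\text{-tableau}\}$, with common $Q$-symbol of shape $\mu^*$. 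Since the two factors commute and $d(\ft_1)d(\ft_2)=d(\ft)$ for $\ft=(\ft_1|\ft_2)$, this yields $\fC=\{d(\ft)\,\sigma_\lambda\mid \ft\in\bT^r_l(\lambda)\text{ standard}\}$. Feeding this into $\fC_\lambda=Y_{l,n-l}\,\fC\,a_l$ gives $\fC_\lambda=\{y\,d(\ft)\,\sigma_\lambda\,a_l\}$; now the decomposition $Y^\lambda=Y_{l,n-l}\cdot Y^\lambda_l$ together with Lemma~\ref{lem21} lets me replace $y\,d(\ft)$ by $d(\fs)$, where $\fs$ is standard exactly when $\ft$ is and ranges over all of $\bT(\lambda)$ as $(y,\ft)$ varies. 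This is the asserted equality $\fC_\lambda=\{d(\fs)\,\sigma_\lambda\,a_l\mid \fs\in\bT(\lambda)\}$.

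For the length equality I would first record that $a_l=w_l\sigma_l$ is an involution, since $w_l$ is central in $W_l$ (hence commutes with $\sigma_l$) and both are longest elements. As $a_l$ is the $y=1$ case of the distinguished coset representatives $Y_{l,n-l}a_l$, we have $\ell(a_l\tau)=\ell(a_l)+\ell(\tau)$ for $\tau\in\fS_n$; inverting and using $a_l^{-1}=a_l$ turns this into $\ell(\tau a_l)=\ell(\tau)+\ell(a_l)$ for all $\tau\in\fS_n$. Applying this to $\tau=d(\fs)\sigma_\lambda$ and to $\tau=\sigma_\lambda$, and combining with $\ell(d(\fs)\sigma_\lambda)=\ell(d(\fs))+\ell(\sigma_\lambda)$ (valid because $d(\fs)\in Y^\lambda$ is a minimal coset representative), gives $\ell(d(\fs)\sigma_\lambda a_l)=\ell(d(\fs))+\ell(\sigma_\lambda a_l)$.

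Finally, the type. Because $\fC_\lambda$ is a single left cell, Theorem~\ref{mainbi2}(a) guarantees that all its elements share one $Q$-symbol, whose shape is the type; it therefore suffices to compute $Q$ on the single representative $\sigma_\lambda a_l$. This computation is the main obstacle: one has to run the generalized Robinson--Schensted correspondence of \cite{BI} on an element of $t$-length $l$ and track how the two symmetric-group blocks feed the two components of the bitableau. The expected (and, in small cases, verifiable) behaviour is that the block $\fS_{\{l+1,\ldots,n\}}$ contributes the first component $\lambda_2^*$ with no further change, whereas the $t$-length block $\fS_{\{1,\ldots,l\}}$ contributes the second component with an extra transpose, so that its type $A$ shape $\lambda_1^*$ returns to $\lambda_1$. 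Pinning down this asymmetric transposition against the conventions of \cite{BI} is where the real work lies; the remaining assertions are bookkeeping with coset representatives and length-additivity.
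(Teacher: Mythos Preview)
Your identification of $\fC_\lambda$ is exactly the paper's argument: describe the left cell $\fC$ of $\fS_{l,n-l}$ through $\sigma_\lambda$ via \cite{McPa} (extended to the direct product), apply Example~\ref{exp1}(b) to obtain $\fC_\lambda=Y_{l,n-l}\,\fC\,a_l$, and then use the decomposition $Y^\lambda=Y_{l,n-l}\cdot Y^\lambda_l$ together with Lemma~\ref{lem21} to rewrite the result as $\{d(\fs)\sigma_\lambda a_l\mid \fs\in\bT(\lambda)\}$. Your length argument is more explicit than the paper's one-line remark, but correct and in the same spirit.

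The only substantive divergence is the type computation, and here your proposal is incomplete rather than wrong. You try to run the generalized Robinson--Schensted algorithm of \cite{BI} on $\sigma_\lambda a_l$ directly and acknowledge that tracking the asymmetric transpose through the conventions is ``where the real work lies''. The paper avoids this entirely: it quotes relation~($\spadesuit$) in the proof of \cite[Prop.~5.4]{GI}, which already records that $a_l\sigma_\lambda$ has type $(\lambda_2^*|\lambda_1)$, and then observes that $\sigma_\lambda a_l=(a_l\sigma_\lambda)^{-1}$, so its type is the same (inversion swaps $P$ and $Q$ in the generalized RS correspondence, hence preserves shape). If you want a self-contained argument, you would indeed have to unwind the bijection of \cite{BI} on this particular element; but the cleaner route is simply to cite \cite{GI} as the paper does.
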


\begin{proof} By relation ($\spadesuit$) in the proof of 
\cite[Prop.~5.4]{GI}, the element $a_l\sigma_\lambda$ has type 
$(\lambda_2^*|\lambda_1)$. Now since $\sigma_\lambda a_l=(a_l
\sigma_\lambda)^{-1}$ it follows that $\sigma_\lambda a_l$ also has type
$(\lambda_2^*|\lambda_1)$. Now, by \cite[Lemma~3.3]{McPa} (extended to the 
direct product of two symmetric groups), the set
\[ \fC:=\{d(\ft)\,\sigma_\lambda\mid \ft \in \bT_l(\lambda)\}\]
is the left cell of $\fS_{l,n-l}$ containing $\sigma_\lambda$, where
$\bT_l(\lambda)$ is the set of all standard $\lambda$-bitableaux in
$\bT^r_l(\lambda)$. Hence, by Example~\ref{exp1}(b), we have
\[ \fC_\lambda=\{y \, d(\ft)\, \sigma_\lambda \, a_l \mid y \in Y_{l,n-l},
\, \ft\in \bT_l(\lambda)\}.\]
Furthermore, $\ell(y\,  d(\ft)\, \sigma_\lambda \, a_l)=
\ell(y\, d(\ft))+\ell(\sigma_\lambda \, a_l)$. Now it remains to use
Lemma~\ref{lem21}.
\end{proof}

\begin{rem} \label{remp22} In the above setting, it is not difficult to 
prove the following related result. Let $x \in W_n$ and 
$l:=\ell_t(x)$. Then we have:

\begin{center} {\em $x \leq_{\cL} \sigma_\lambda a_l\iff x=d(\fs)
\sigma_\lambda a_l$ where $\fs$ is a row-standard $\lambda$-bitableaux.}
\end{center}
This follows from the properties of the Clifford normal form of the elements
in $W_n$ established in \cite[\S7]{BI} and the refinement obtained in
\cite[Theorem~5.11]{Ge05}. As we do not need this result in this 
paper, we omit further details. 
\end{rem}
\section{Specht modules} \label{sec:djm}

We keep the setup of the previous section, where we consider the
Iwahori--Hecke algebra $\bH_n$ of type $B_n$, defined over a polynomial
ring $A={\Z}[V^{\pm 1},v^{\pm 1}]$ in two independent indeterminates. 
We now consider the Specht modules defined by Dipper--James--Murphy
\cite{DJM}. The definition is based on the construction of a new basis 
of $\bH_n$, which is of the form
\[ \{ x_{\fs\ft}=T_{d(\fs)}\, x_\lambda \, T_{d(\ft)^{-1}} \mid 
\lambda \in \Lambda_n \mbox{ and } \fs,\ft \in \bT(\lambda)\}\]
where the element $x_\lambda$ is defined in \cite[4.1]{DJM}; note that
the definition of $x_{\lambda}$ does not rely on the choice of a total 
order on $\Gamma$. (An explicit description of $x_\lambda$ will be given 
in Lemma~\ref{lemxl} below.)

Let $N^\lambda \subseteq \bH_n$ be the $A$-submodule spanned by all 
$x_{\fs\ft}$ where $\fs$ and $\ft$ are standard $\mu$-bitableaux such 
that $\lambda \trianglelefteq \mu$. Here, $\trianglelefteq$ denotes the 
dominance order on bipartitions, which is defined as follows; see 
Dipper--James--Murphy \cite[\S 3]{DJM}: Let $\lambda=(\lambda_1|\lambda_2)$ 
and $\mu=(\mu_1|\mu_2)$ be bipartitions of $n$, with parts
\begin{gather*}
\lambda_1=(\lambda_1^{(1)} \geq \lambda_1^{(2)} \geq \cdots \geq 0),\qquad
\lambda_2=(\lambda_2^{(1)} \geq \lambda_2^{(2)} \geq \cdots \geq 0),\\
\mu_1=(\mu_1^{(1)} \geq \mu_1^{(2)} \geq \cdots \geq 0),\qquad
\mu_2=(\mu_2^{(1)} \geq \mu_2^{(2)} \geq \cdots \geq 0).
\end{gather*}
Then $\lambda \trianglelefteq \mu$ if 
\[ \sum_{i=1}^j \lambda_1^{(i)} \leq \sum_{i=1}^j \mu_1^{(i)} \quad(\forall j)
\quad \mbox{and}\quad |\lambda_1|+ \sum_{i=1}^j \lambda_2^{(i)} \leq 
|\mu_1|+ \sum_{i=1}^j \mu_2^{(i)} \quad(\forall j).\]
By \cite[Cor.~4.13]{DJM}, $N^\lambda$ is a two-sided
ideal of $\bH_n$. Since the basis elements $T_w$ ($w\in W_n)$ are invertible 
in $\bH_n$, we conclude that 
\[ N^\lambda=\sum_{\mu\in \Lambda_n;\, \lambda \trianglelefteq \mu}
\bH_n x_\mu \bH_n.\]
Similarly, we have the two-sided ideal $\hat{N}^\lambda$ spanned by 
all $x_{\fs\ft}$ where $\fs$ and $\ft$ are standard $\mu$-bitableaux
such that $\lambda \triangleleft \mu$ (that is,  $\lambda \trianglelefteq 
\mu$ but $\lambda \neq \mu$). 

\begin{defn}[Dipper--James--Murphy \protect{\cite[Def.~4.19]{DJM}}] 
\label{def0} Let $\lambda\in \Lambda_n$. The corresponding 
{\em Specht module} is defined by 
\[ \tilde{S}^\lambda :=M^\lambda/(M^\lambda \cap \hat{N}^\lambda)
\qquad \mbox{where} \qquad M^\lambda=\bH_n x_\lambda. \] 
By \cite[Theorem~4.20]{DJM}, $\tilde{S}^\lambda$ is free over $A$, with 
standard basis $\{x_\fs \mid \fs\in \bT(\lambda)\}$ where $x_\fs$ denotes
the class modulo $M^\lambda \cap \hat{N}^\lambda$ of the element 
$x_{\fs\ft^\lambda}\in M^\lambda$. 
\end{defn}

Our task will be to identify these Specht modules with certain 
Kazhdan--Lusztig left cells modules. For this purpose, assume from
now on that we have chosen a total order on $\Gamma$ such that we
are in the ``asymptotic case'' defined in \S \ref{subbn}. Our first result, 
which is based on Bonnaf\'e \cite{BI2}, identifies $x_\lambda$ in terms of 
the corresponding Kazhdan--Lusztig basis of $\bH_n$.

\begin{lem} \label{lemxl} Let $\lambda= (\lambda_1 |\lambda_2)\in 
\Lambda_n$ and $l=|\lambda_1|$.  Then 
\[ V^{l}v^{l(l-1)-\ell(\sigma_\lambda)}\,x_\lambda=T_{\sigma_l}\, 
C_{a_l\sigma_\lambda}=C_{\sigma_\lambda a_l} T_{\sigma_l},\] 
where the elements $\sigma_l$, $a_l$ and $\sigma_\lambda$ are 
defined in \S 2. 
\end{lem}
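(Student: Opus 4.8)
The plan is to prove the first equality $V^{l}v^{l(l-1)-\ell(\sigma_\lambda)}\,x_\lambda=T_{\sigma_l}\,C_{a_l\sigma_\lambda}$ and then to obtain the second one for free via the anti-automorphism $\flat$. Recall $a_l=w_l\sigma_l$ with $w_l$ central in $W_l$, so $a_l$ is an involution, and $\sigma_l,\sigma_\lambda$ are involutions as longest elements; hence $T_{\sigma_l}^\flat=T_{\sigma_l}$ and $C_{a_l\sigma_\lambda}^\flat=C_{(a_l\sigma_\lambda)^{-1}}=C_{\sigma_\lambda a_l}$. Since $\flat$ is an $A$-linear anti-automorphism fixing the scalar, and since $x_\lambda^\flat=x_\lambda$ (this is the symmetry $x_{\fs\ft}^\flat=x_{\ft\fs}$ of the Dipper--James--Murphy basis at $\fs=\ft=\ft^\lambda$), applying $\flat$ to the first equality turns $T_{\sigma_l}C_{a_l\sigma_\lambda}$ into $C_{\sigma_\lambda a_l}T_{\sigma_l}$ and leaves the left-hand side unchanged. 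So it suffices to establish the first equality.

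The two tools I would use are the symmetrizer formula for longest elements of parabolic subgroups and the explicit shape of $x_\lambda$. For a standard parabolic $W_J$ write $\Xi_J:=\sum_{w\in W_J}\varepsilon^{L(w)}T_w$; a direct check gives $T_s\Xi_J=v_s\Xi_J=\Xi_J T_s$ for $s\in J$, whence $T_x\Xi_J=\varepsilon^{L(x)}\Xi_J=\Xi_J T_x$ for all $x\in W_J$, and the standard fact $C_{w_J}=\varepsilon^{-L(w_J)}\Xi_J$ for the longest element $w_J$ of $W_J$. On the other side, from the explicit formula in \cite[4.1]{DJM} the element $x_\lambda$ factors as $x_\lambda=u_l^+\,\tilde m_{\lambda_1}\,\tilde m_{\lambda_2}$, where $\tilde m_{\lambda_i}=\sum_{w\in\fS_{\lambda_i}}\varepsilon^{L(w)}T_w$ is the symmetrizer of the row subgroup $\fS_{\lambda_i}$ and $u_l^+$ involves only $t,s_1,\ldots,s_{l-1}$. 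In particular $\tilde m_{\lambda_2}$ commutes with the rest (disjoint support), so $x_\lambda=x_{(\lambda_1|\emptyset)}\,\tilde m_{\lambda_2}$ with $x_{(\lambda_1|\emptyset)}=u_l^+\tilde m_{\lambda_1}\in\bH(W_l)$.

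Next I would factor the right-hand side the same way. Using that $w_l$ is central and the disjointness of supports, $a_l\sigma_\lambda=(a_l\sigma_{\lambda_1})\,\sigma_{\lambda_2}$ with $a_l\sigma_{\lambda_1}=w_l\sigma_l\sigma_{\lambda_1}\in W_l$, $\sigma_{\lambda_2}\in\fS_{\lambda_2}$, and additive lengths. Since $P=W_l\times\fS_{\lambda_2}$ is a standard parabolic and the Kazhdan--Lusztig basis is compatible with parabolic subalgebras (for the same total order), $C_{a_l\sigma_\lambda}$ equals the corresponding element of $\bH(P)=\bH(W_l)\otimes_A\bH(\fS_{\lambda_2})$ and therefore factors as $C_{a_l\sigma_{\lambda_1}}\,C_{\sigma_{\lambda_2}}$, with $C_{\sigma_{\lambda_2}}=v^{-\ell(\sigma_{\lambda_2})}\tilde m_{\lambda_2}$ by the symmetrizer formula. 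Multiplying on the left by $T_{\sigma_l}\in\bH(W_l)$, cancelling the common right factor $\tilde m_{\lambda_2}$ (right multiplication by $\tilde m_{\lambda_2}$ is injective on $\bH(W_l)$), and using $\ell(\sigma_\lambda)=\ell(\sigma_{\lambda_1})+\ell(\sigma_{\lambda_2})$, reduces the claim to the purely type-$B_l$ identity
\[ T_{\sigma_l}\,C_{a_l\sigma_{\lambda_1}}=V^{l}v^{l(l-1)-\ell(\sigma_{\lambda_1})}\,u_l^+\,\tilde m_{\lambda_1}. \]

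Finally I would prove this core identity. When $\lambda_1=(l)$ one has $\sigma_{\lambda_1}=\sigma_l$, $a_l\sigma_{\lambda_1}=w_l$, so $C_{w_l}=V^{-l}v^{-l(l-1)}\Xi_l$ and $T_{\sigma_l}C_{w_l}=\varepsilon^{L(\sigma_l)}V^{-l}v^{-l(l-1)}\Xi_l$; matching this against $u_l^+\tilde m_{\lambda_1}$ comes down to the scalar relation $u_l^+\,\Xi_{\fS_{\{1,\ldots,l\}}}=V^{-2l}v^{-l(l-1)}\Xi_l$, immediate from the definition of $u_l^+$ (sanity-checked at $l=1$, where $u_1^+=V^{-2}(1+VT_t)$ and $\Xi_1=1+VT_t$). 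For general $\lambda_1$ this direct route fails, because $a_l\sigma_{\lambda_1}$ is no longer the longest element of a parabolic subgroup and $C_{a_l\sigma_{\lambda_1}}$ is not proportional to $\Xi_l$; here I would invoke the explicit description of these distinguished Kazhdan--Lusztig basis elements in the asymptotic case due to Bonnaf\'e \cite{BI2}, on which the argument rests, to identify $T_{\sigma_l}C_{a_l\sigma_{\lambda_1}}$ with the normalized symmetrizer $u_l^+\tilde m_{\lambda_1}$. This identification, controlling the genuine cancellations in $C_{a_l\sigma_{\lambda_1}}$ for arbitrary $\lambda_1$, is the main obstacle; the remaining bookkeeping of the powers of $V$ and $v$ through $L$, $\ell_t$ and $\ell(\sigma_\lambda)$ is routine.
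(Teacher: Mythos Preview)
Your overall shape---factor $x_\lambda=u_l^+\,x_{\lambda_1}\,x_{\lambda_2}$ and use the anti-automorphism $\flat$ to pass between the two equalities---matches the paper. The difference is in how the ``core identity'' is handled, and this is exactly where you flag an obstacle and leave the argument incomplete.

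The paper avoids your reduction to $W_l$ entirely. It never factors $C_{a_l\sigma_\lambda}$ through the parabolic $W_l\times\fS_{\lambda_2}$; instead it uses two precise results from \cite{BI2}:
\begin{itemize}
\item[(i)] \cite[Prop.~2.5]{BI2}: $V^l v^{l(l-1)}\,u_l^+ = T_{\sigma_l}C_{a_l} = C_{a_l}T_{\sigma_l}$,
\item[(ii)] \cite[Prop.~2.3]{BI2}: $C_{a_l}C_{\sigma_\lambda}=C_{a_l\sigma_\lambda}$ and $C_{\sigma_\lambda}C_{a_l}=C_{\sigma_\lambda a_l}$,
\end{itemize}
together with $x_{\lambda_1}x_{\lambda_2}=v^{\ell(\sigma_\lambda)}C_{\sigma_\lambda}$ (Lusztig \cite[Cor.~12.2]{Lusztig03}). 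Multiplying (i) on the right by $C_{\sigma_\lambda}$ and applying (ii) gives the result in one line; the three factors of $x_\lambda$ commute, so both orderings come out at once without invoking $\flat$.

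Your ``main obstacle''---identifying $T_{\sigma_l}C_{a_l\sigma_{\lambda_1}}$ for general $\lambda_1$---dissolves immediately once you know (ii): $C_{a_l\sigma_{\lambda_1}}=C_{a_l}C_{\sigma_{\lambda_1}}$, and then (i) handles $T_{\sigma_l}C_{a_l}$. So the input you need from \cite{BI2} is not an ``explicit description of $C_{a_l\sigma_{\lambda_1}}$ controlling genuine cancellations'' but rather the multiplicative splitting $C_{a_l}C_\sigma=C_{a_l\sigma}$ for $\sigma\in\fS_{l,n-l}$, which is a short direct computation in \cite{BI2}. Your parabolic factorization $C_{a_l\sigma_\lambda}=C_{a_l\sigma_{\lambda_1}}C_{\sigma_{\lambda_2}}$ is correct but an unnecessary intermediate step; the same Proposition~2.3 lets you split off all of $\sigma_\lambda$ at once.
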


\begin{proof} In \cite[4.1]{DJM}, the element $x_\lambda$ is defined as the 
product of three commuting factors $u_l^+$, $x_{\lambda_1}$, $x_{\lambda_2}$. 
Bonnaf\'e's formula \cite[Prop.~2.5]{BI2} shows that
\[ V^l v^{l(l-1)}\,u_l^+= C_{a_l}T_{\sigma_l}= T_{\sigma_l}C_{a_l}\, .\]
Furthermore, by Lusztig \cite[Cor.~12.2]{Lusztig03}, we have 
$x_{\lambda_1} x_{\lambda_2}=v^{\ell(\sigma_\lambda)}C_{\sigma_\lambda}$.
Finally, by \cite[Prop.~2.3]{BI2}, we have $C_{a_l} C_{\sigma_\lambda}=
C_{a_l \sigma_\lambda}$ and $C_{\sigma_\lambda}C_{a_l}=C_{\sigma_\lambda 
a_l}$. This yields the desired formulas.
\end{proof}

\begin{cor} \label{lem01} Let $\lambda \in \Lambda_n$. Then 
$M^\lambda=\bH_n\, C_{a_l\sigma_\lambda}=\bH_n C_{\sigma_\lambda a_l} 
T_{\sigma_l}$.
\end{cor}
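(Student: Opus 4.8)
The plan is to deduce the corollary directly from Lemma~\ref{lemxl}, using only the invertibility facts already recorded in the paper. By definition $M^\lambda=\bH_n x_\lambda$, so the whole task is to replace the generator $x_\lambda$ by the Kazhdan--Lusztig expressions supplied by the lemma and then clean up the invertible factors.

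First I would note that the scalar $V^{l}v^{l(l-1)-\ell(\sigma_\lambda)}$ occurring in Lemma~\ref{lemxl} is a Laurent monomial in $V$ and $v$, hence a unit in $A=\Z[V^{\pm 1},v^{\pm 1}]$ and thus a central unit of $\bH_n$. Multiplying a generator of a left ideal by a central unit does not alter the ideal (if $c$ is a central unit then $cx\in\bH_n x$ and $x=c^{-1}(cx)\in\bH_n(cx)$), so $M^\lambda=\bH_n\bigl(V^{l}v^{l(l-1)-\ell(\sigma_\lambda)}x_\lambda\bigr)$. Feeding in the two equalities of Lemma~\ref{lemxl} then gives at once
\[ M^\lambda=\bH_n\,T_{\sigma_l}\,C_{a_l\sigma_\lambda}=\bH_n\,C_{\sigma_\lambda a_l}\,T_{\sigma_l}.\]
The second of these is already the right-hand identification claimed in the statement, since there $T_{\sigma_l}$ stands on the right and is simply carried along.

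It remains to strip the factor $T_{\sigma_l}$ off the first expression. Since every basis element $T_w$ is invertible in $\bH_n$ (as recalled at the start of Section~\ref{sec:djm}), right-multiplying the ring by the unit $T_{\sigma_l}$ recovers the whole ring, i.e.\ $\bH_n T_{\sigma_l}=\bH_n$. Hence $\bH_n T_{\sigma_l}C_{a_l\sigma_\lambda}=\bH_n C_{a_l\sigma_\lambda}$, which supplies the remaining identity $M^\lambda=\bH_n C_{a_l\sigma_\lambda}$. Chaining the three descriptions together yields the full statement.

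I do not expect a genuine obstacle: the substantive content is entirely in Lemma~\ref{lemxl}, and the corollary is a bookkeeping reformulation resting on the invertibility of Laurent monomials and of the $T_w$. The one point to keep straight is that here $T_{\sigma_l}$ multiplies $C_{a_l\sigma_\lambda}$ on the \emph{left}, so it is precisely the identity $\bH_n T_{\sigma_l}=\bH_n$ that is invoked (not a right-hand cancellation), whereas in the expression $\bH_n C_{\sigma_\lambda a_l}T_{\sigma_l}$ the factor sits on the right and is deliberately left in place, exactly as the corollary asserts.
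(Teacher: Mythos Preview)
Your argument is correct and is precisely the paper's approach: the paper's own proof is the single line ``Clear by Lemma~\ref{lemxl}; just note $v,V$ and $T_{\sigma_l}$ are invertible in $\bH_n$,'' which your write-up spells out in detail.
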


\begin{proof} Clear by Lemma~\ref{lemxl}; just note $v,V$ and 
$T_{\sigma_l}$ are invertible in $\bH_n$. 
\end{proof}

\begin{prop} \label{lem2} Let $\lambda\in \Lambda_n$. Then we have
\begin{align*}
N^\lambda & =\langle C_y \;(y \in W_n)\mid y \leadsto (\nu_1|\nu_2)
\mbox{ where } (\lambda_1|\lambda_2) \trianglelefteq (\nu_2|\nu_1^*)
\rangle_A\tag{a}\\ &\supseteq \langle C_y \; (y \in W_n) \mid y
\leq_{\cLR} a_l\sigma_\lambda\rangle_A,\\
\hat{N}^\lambda & =\langle C_y \;(y \in W_n)\mid C_y \in N^\lambda
\mbox{ and }  y\not\sim_{\cLR} a_l\sigma_\lambda \rangle_A.\tag{b}
\end{align*}
\end{prop}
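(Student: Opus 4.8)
The goal is to identify the Dipper–James–Murphy two-sided ideals $N^\lambda$ and $\hat N^\lambda$—defined combinatorially in terms of the $x_{\fs\ft}$-basis and the dominance order $\trianglelefteq$—with ideals spanned by Kazhdan–Lusztig basis elements $C_y$, governed by the two-sided cell order $\leq_{\cLR}$ in the asymptotic case. The principal tool is Theorem~\ref{mainbi2}(b), which says that two elements are $\sim_{\cLR}$-equivalent exactly when their $P$- and $Q$-symbols share a common shape; thus the two-sided cells are indexed by $\Lambda_n$, and $a_l\sigma_\lambda$ is a natural representative of the cell of its type. The first task is therefore to pin down the type of $a_l\sigma_\lambda$: by the computation cited from \cite[Prop.~5.4]{GI} in the proof of Proposition~\ref{prop22}, the element $a_l\sigma_\lambda$ has type $(\lambda_2^*|\lambda_1)$. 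I would begin by recording this, since it explains the twist $(\nu_2|\nu_1^*)$ appearing in the statement: an element $y\leadsto(\nu_1|\nu_2)$ satisfies $y\sim_{\cLR}a_l\sigma_\lambda$ precisely when $(\nu_1|\nu_2)$ and $(\lambda_2^*|\lambda_1)$ agree up to the shape-matching of Theorem~\ref{mainbi2}(b), i.e. when $(\nu_2|\nu_1^*)=(\lambda_1|\lambda_2)$.

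**Reducing (a) to a filtration argument.**
For part (a) I would argue in two directions. The reverse containment is the structural heart: I want to show that $N^\lambda$ is a union of two-sided cell modules, i.e. that it is spanned by those $C_y$ whose type $(\nu_1|\nu_2)$ satisfies $(\lambda_1|\lambda_2)\trianglelefteq(\nu_2|\nu_1^*)$. Here I would invoke the key fact, already available from the preceding lemmas, that the Specht-filtration ideals and the cell-filtration ideals are \emph{defined by the same partial order} once one matches up the indexing. Concretely, using $N^\lambda=\sum_{\lambda\trianglelefteq\mu}\bH_n x_\mu\bH_n$ together with Lemma~\ref{lemxl} (which rewrites $x_\mu$ as a unit times $C_{a_{l(\mu)}\sigma_\mu}$ up to the invertible $T_{\sigma_{l(\mu)}}$), one sees that $N^\lambda=\sum_{\lambda\trianglelefteq\mu}\bH_n\,C_{a_{l(\mu)}\sigma_\mu}\,\bH_n$, a sum of two-sided ideals generated by cell representatives. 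The content is then that the two-sided ideal generated by $C_{a_{l(\mu)}\sigma_\mu}$ is exactly $\langle C_y\mid y\leq_{\cLR}a_{l(\mu)}\sigma_\mu\rangle_A$, which is standard Kazhdan–Lusztig ideal theory. Summing over $\mu$ and translating shapes via Theorem~\ref{mainbi2}(b) produces the $C_y$-description in (a); the displayed $\supseteq$ is then the single term $\mu=\lambda$.

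**Part (b) and the main obstacle.**
For part (b) I would use that $\hat N^\lambda$ is built from $N^\lambda$ by deleting exactly the top layer, namely the $x_{\fs\ft}$ with $\fs,\ft$ standard of shape $\lambda$ itself (keeping only $\lambda\triangleleft\mu$). On the cell side this corresponds to removing the single two-sided cell of type $(\lambda_2^*|\lambda_1)$, i.e. the $C_y$ with $y\sim_{\cLR}a_l\sigma_\lambda$; everything strictly below in $\leq_{\cLR}$ survives. Hence $\hat N^\lambda=\langle C_y\mid C_y\in N^\lambda,\ y\not\sim_{\cLR}a_l\sigma_\lambda\rangle_A$, which is the claim. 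The main obstacle, and the step deserving the most care, is the \emph{equality} of the two filtrations rather than mere containment in one direction: both $N^\lambda$ and the candidate $C_y$-span are free $A$-submodules, so it suffices to check that they have the same $A$-rank, i.e. that the number of $x_{\fs\ft}$ in the spanning set matches the number of $y\in W_n$ with $y\leadsto(\nu_1|\nu_2)$ and $(\lambda_1|\lambda_2)\trianglelefteq(\nu_2|\nu_1^*)$. This is a counting identity: $\#\{(\fs,\ft):\fs,\ft\in\bT(\mu)\}=|\bT(\mu)|^2$, while the generalized Robinson–Schensted bijection gives $\#\{y:y\leadsto\mu'\}=|\bT(\mu')|^2$ for the corresponding twisted shape $\mu'=(\mu_2^*|\mu_1)$ or its transpose; since conjugation/transposition is a bijection on bitableaux of matched shapes, the counts agree summand by summand over the dominance ideal. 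Verifying that the shape-twist $(\nu_1|\nu_2)\mapsto(\nu_2|\nu_1^*)$ is exactly the one making these dimension counts and the order $\trianglelefteq$ compatible is where I would expect to spend the real effort.
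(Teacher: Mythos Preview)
Your approach differs from the paper's and has a genuine gap. The paper does not attempt to prove the equality in (a) directly: it simply cites \cite[Theorem~1.5]{GI} for $N^\lambda=\langle C_y\mid y\leadsto(\nu_1|\nu_2),\ \lambda\trianglelefteq(\nu_2|\nu_1^*)\rangle_A$, and then invokes \cite[Prop.~5.4]{GI} (the compatibility of $\leq_{\cLR}$ with the dominance order) to deduce the displayed containment $\supseteq$. Part (b) is then immediate from (a), exactly as you do.

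Your more constructive route---rewrite $N^\lambda$ via Lemma~\ref{lemxl} as a sum of ideals $\bH_n\,C_{a_{l(\mu)}\sigma_\mu}\,\bH_n$, embed each into a $\leq_{\cLR}$-closed span, and close by rank counting---is an interesting alternative, but two steps are not justified. First, the assertion that $\bH_n\,C_w\,\bH_n=\langle C_y\mid y\leq_{\cLR} w\rangle_A$ is \emph{not} ``standard Kazhdan--Lusztig ideal theory'' in the unequal-parameter setting; only the inclusion $\subseteq$ is immediate from the definition of $\leq_{\cLR}$. (Fortunately you only need $\subseteq$, so this is a misstatement rather than a fatal error.) Second---and this is the real gap---your sentence ``Summing over $\mu$ and translating shapes via Theorem~\ref{mainbi2}(b)'' is where the argument breaks. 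Theorem~\ref{mainbi2}(b) tells you only that $\sim_{\cLR}$ corresponds to equality of shapes; it says nothing about how the preorder $\leq_{\cLR}$ interacts with $\trianglelefteq$. To pass from $\{y:y\leq_{\cLR}a_{l(\mu)}\sigma_\mu\}$ to the set described in (a) you need precisely the statement that $y\leq_{\cLR}a_{l(\mu)}\sigma_\mu$ forces a dominance relation on the type of $y$, and that is the content of \cite[Prop.~5.4]{GI}, a substantial result which you have not invoked. Once that ingredient is supplied, your rank-counting closes the argument (both spans are $A$-direct summands of $\bH_n$, and your bijection $|\bT(\mu)|=|\bT(\mu_2^*|\mu_1)|$ is correct), but you should also spell out why equal rank plus containment forces equality over the domain $A$.
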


\begin{proof} (a) The equality is proved in \cite[Theorem~1.5]{GI}.
Now let $y \in W_n$ be such that $y \leq_{\cLR} a_l \sigma_\lambda$.
Assume that $y \leadsto (\mu_1|\mu_2)$. Then Proposition~\ref{prop22} 
and \cite[Prop.~5.4]{GI}  show that $(\mu_1|\mu_2^*) \trianglelefteq 
(\lambda_2^*|\lambda_1^*)$ or, equivalently, $(\lambda_1|\lambda_2) 
\trianglelefteq (\mu_2|\mu_1^*)$. Thus, we have $C_y\in N^\lambda$, as 
required.

(b) Since $\hat{N}^\lambda$ is the sum of all $N^\mu$ where $\mu\in 
\Lambda_n$ and $\lambda \triangleleft \mu$, the equality in (a) also implies 
that 
\[\hat{N}^\lambda=\langle C_y \;(y \in W_n)\mid y \leadsto (\nu_1|\nu_2)
\mbox{ where } (\lambda_1|\lambda_2) \triangleleft (\nu_2|\nu_1^*)
\rangle_A.\]
So (b) follows from the description of the two-sided cells in
Theorem~\ref{mainbi2}(b).
\end{proof}

Now we are ready to construct a canonical homomorphism from a Specht
module to a certain Kazhdan--Lusztig cell module. 

\begin{lem} \label{const1} Let $\lambda=(\lambda_1|\lambda_2) \in \Lambda_n$ 
and $l=|\lambda_1|$. Let $\fC_\lambda$ be the left cell of $W_n$ containing
$\sigma_\lambda a_l$ (with respect to the ``asymptotic case''); see
Proposition~\ref{prop22}. Then there is a unique $\bH_n$-module homomorphism 
$\varphi_\lambda \colon \tilde{S}^\lambda \rightarrow [\fC_\lambda]_A$ which 
sends the class of $x_\lambda \in M^\lambda$ in $\tilde{S}^\lambda$ to the 
class of $C_{\sigma_\lambda a_l} \in \fI_\lambda$ in $[\fC_\lambda]_A$.
\end{lem}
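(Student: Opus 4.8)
### Proof Proposal

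The plan is to construct $\varphi_\lambda$ directly as a map on the whole module $M^\lambda = \bH_n x_\lambda$ and then check that it factors through the quotient defining $\tilde S^\lambda$. The key observation is that by Corollary~\ref{lem01} we have $M^\lambda = \bH_n\, C_{\sigma_\lambda a_l} T_{\sigma_l}$, while the target cell module $[\fC_\lambda]_A$ has a distinguished generator $e_{\sigma_\lambda a_l}$, the class of $C_{\sigma_\lambda a_l}$. So I would first define an auxiliary $\bH_n$-module homomorphism $\psi \colon \bH_n \to [\fC_\lambda]_A$ by $h \mapsto h.e_{\sigma_\lambda a_l}$; this is automatically a left $\bH_n$-module map since $[\fC_\lambda]_A$ is a left module. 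The element $C_{\sigma_\lambda a_l} \in \fI_{\fC_\lambda}$ maps under $\psi$ to its own class $e_{\sigma_\lambda a_l}$, and $T_{\sigma_l}^{-1}C_{\sigma_\lambda a_l}$, being a preimage of $x_\lambda$ up to the invertible scalar from Lemma~\ref{lemxl}, gives a well-defined image.

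Next I would restrict attention to $M^\lambda$ and rescale. Concretely, define $\tilde\varphi \colon M^\lambda \to [\fC_\lambda]_A$ as the restriction of $\psi$ to $M^\lambda$, post-composed so that the generator $x_\lambda$ itself (rather than a scalar multiple) is tracked. Since $x_\lambda = V^{-l}v^{\ell(\sigma_\lambda)-l(l-1)} C_{\sigma_\lambda a_l}T_{\sigma_l}$ by Lemma~\ref{lemxl}, and the scalar is a unit in $A$, the map $h x_\lambda \mapsto V^{-l}v^{\ell(\sigma_\lambda)-l(l-1)}\, h\, C_{\sigma_\lambda a_l}T_{\sigma_l}.e_{\sigma_\lambda a_l}$ is a left $\bH_n$-module homomorphism sending the class of $x_\lambda$ to (a unit multiple of) the class of $C_{\sigma_\lambda a_l}$. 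Uniqueness is immediate: any $\bH_n$-homomorphism out of a cyclic module is determined by the image of the generator $x_\lambda$.

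The crux is to verify that this map on $M^\lambda$ kills $M^\lambda \cap \hat N^\lambda$, so that it descends to $\tilde S^\lambda$. Here I would use Proposition~\ref{lem2}(b), which identifies $\hat N^\lambda$ with the span of those $C_y$ with $C_y \in N^\lambda$ but $y \not\sim_{\cLR} a_l\sigma_\lambda$. The construction of the cell module shows that $[\fC_\lambda]_A = \fI_{\fC_\lambda}/\hat\fI_{\fC_\lambda}$, where $\hat\fI_{\fC_\lambda}$ is spanned by those $C_y$ with $y \leq_{\cL} w$ for some $w \in \fC_\lambda$ but $y \notin \fC_\lambda$. I expect the main obstacle to be showing that any element of $M^\lambda \cap \hat N^\lambda$, when pushed into $\fI_{\fC_\lambda}$ via $\psi$, lands in $\hat\fI_{\fC_\lambda}$. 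The point is that an element of $\hat N^\lambda$ is a combination of $C_y$ with $y$ strictly below $a_l\sigma_\lambda$ in the two-sided order; since $\fC_\lambda$ is the left cell of $\sigma_\lambda a_l$ and $\sigma_\lambda a_l \sim_{\cLR} a_l \sigma_\lambda$, any such $C_y$ appearing in the image either fails to satisfy $y \leq_{\cL} \sigma_\lambda a_l$ (hence contributes $0$ to $[\fC_\lambda]_A$ by the ideal structure) or satisfies $y \leq_{\cL} \sigma_\lambda a_l$ with $y \notin \fC_\lambda$ (hence lies in $\hat\fI_{\fC_\lambda}$). Either way the image in $[\fC_\lambda]_A$ is zero, which is exactly what is needed. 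Once this compatibility of the two filtrations is established, $\varphi_\lambda$ is the induced map on the quotient, and its defining property on the class of $x_\lambda$ holds by construction.
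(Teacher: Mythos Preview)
There is a genuine gap in your construction of the map. Your auxiliary homomorphism $\psi\colon\bH_n\to[\fC_\lambda]_A$, $h\mapsto h.e_{\sigma_\lambda a_l}$, is a perfectly good left $\bH_n$-module map, but it does \emph{not} send $C_{\sigma_\lambda a_l}$ to $e_{\sigma_\lambda a_l}$: by definition $\psi(C_{\sigma_\lambda a_l})=C_{\sigma_\lambda a_l}.e_{\sigma_\lambda a_l}$ is the class of $C_{\sigma_\lambda a_l}^{2}$ modulo $\hat\fI_\lambda$, which has no reason to be a unit multiple of $e_{\sigma_\lambda a_l}$. You are conflating the orbit map $h\mapsto h.e_{\sigma_\lambda a_l}$ with the quotient map $\fI_\lambda\to[\fC_\lambda]_A$ (under which $C_{\sigma_\lambda a_l}$ does go to $e_{\sigma_\lambda a_l}$). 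The same confusion recurs in your second paragraph: the formula you write for $\tilde\varphi(hx_\lambda)$ simplifies, via Lemma~\ref{lemxl}, to $(hx_\lambda).e_{\sigma_\lambda a_l}$, i.e.\ to $\psi(hx_\lambda)$; so $\tilde\varphi(x_\lambda)=x_\lambda.e_{\sigma_\lambda a_l}$, not a scalar multiple of $e_{\sigma_\lambda a_l}$.

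The paper fixes this by working inside $\bH_n$ rather than inside $[\fC_\lambda]_A$. One sets $\zeta_\lambda$ to be the appropriate unit scalar times $T_{\sigma_l}^{-1}$ and uses \emph{right multiplication} $\rho_\lambda\colon h\mapsto h\zeta_\lambda$, which is a left $\bH_n$-module automorphism of $\bH_n$. Lemma~\ref{lemxl} gives $\rho_\lambda(x_\lambda)=C_{\sigma_\lambda a_l}$ exactly, and Corollary~\ref{lem01} gives $\rho_\lambda(M^\lambda)=\bH_nC_{\sigma_\lambda a_l}\subseteq\fI_\lambda$. One then composes with the quotient $\fI_\lambda\to[\fC_\lambda]_A$. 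The filtration check is also cleaner this way: since $\hat N^\lambda$ is a two-sided ideal it is stable under $\rho_\lambda$, so $\rho_\lambda(M^\lambda\cap\hat N^\lambda)\subseteq\fI_\lambda\cap\hat N^\lambda$, and then Proposition~\ref{lem2}(b) gives $\fI_\lambda\cap\hat N^\lambda\subseteq\hat\fI_\lambda$ exactly along the lines you sketched in your last paragraph. Your filtration argument is essentially right in spirit; what is missing is the correct transport map that actually carries $x_\lambda$ to $C_{\sigma_\lambda a_l}$.
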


\begin{proof} Recall that $[\fC_\lambda]_A=\fI_\lambda/\hat{\fI}_\lambda$, 
where
\begin{align*}
\fI_\lambda &=\langle C_y \mid y \in W_n \mbox{ such that }
y \leq_{\cL} \sigma_\lambda a_l\rangle_A,\\
\hat{\fI}_\lambda &=\langle C_y \mid y \in W_n \mbox{ such that }
y \leq_{\cL} \sigma_\lambda a_l \mbox{ and } y \not\in
\fC_\lambda\rangle_A.
\end{align*}
We define $\zeta_\lambda:=V^{-(l-1)}\,v^{\ell(\sigma_\lambda)-l(l-1)} \,
T_{\sigma_l}^{-1} \in \bH_n$. (Note that any element of the $T$-basis
is invertible in $\bH_n$.) Then the map 
\[ \rho_\lambda \colon \bH_n \rightarrow \bH_n, \qquad h \mapsto h\,
\zeta_\lambda,\]
(that is, right multiplication by $\zeta_\lambda$) is a left $\bH_n$-module 
isomorphism. By Lemma~\ref{lemxl}, Corollary~\ref{lem01} and the definition
of $\leq_{\cL}$, we have
\[ \rho_\lambda(x_\lambda)=C_{\sigma_\lambda a_l} \qquad \mbox{and}\qquad
\rho_\lambda(M^\lambda)=\bH_n\,C_{\sigma_\lambda a_l}\subseteq\fI_\lambda.\]
Now, by Proposition~\ref{lem2}, we certainly have $\fI_\lambda \cap 
\hat{N}^\lambda \subseteq \hat{\fI}^\lambda$ and so  
\[ \rho_\lambda(M^\lambda \cap \hat{N}^\lambda)\subseteq \bH_n 
C_{\sigma_\lambda a_l} \cap \hat{N}^\lambda \subseteq \fI_\lambda 
\cap \hat{N}^\lambda \subseteq \hat{\fI}_\lambda.\]
Hence, recalling also that $\tilde{S}^\lambda=M^\lambda/M^\lambda \cap
\hat{N}^\lambda$, we obtain a well-defined $\bH_n$-module homomorphism
\[ \varphi_\lambda \colon \tilde{S}^\lambda \rightarrow [\fC_\lambda]_A,
\qquad m+(M^\lambda \cap \hat{N}^\lambda) \mapsto m\zeta_\lambda+
\hat{\fI}_\lambda,\]
having the desired properties. The unicity of $\varphi_\lambda$ is clear 
since $\tilde{S}^\lambda$ is generated, as an $\bH_n$-module, by the 
class of $x_\lambda$.
\end{proof}

Next, we would like to obtain more detailed information about the matrix
of $\varphi_\lambda\colon \tilde{S}^\lambda \rightarrow [\fC_\lambda]_A$ 
with respect to the standard bases of the two modules.  The aim will be to 
show that this matrix is triangular with $1$ on the diagonal; in particular, 
this will show that $\varphi_\lambda$ is an isomorphism. 

Recall that the Specht module $\tilde{S}^\lambda$ has a standard basis 
$\{x_\fs \mid \fs \in \bT(\lambda)\}$; see Definition~\ref{def0}. On the 
other hand, by the definition of cell modules and Proposition~\ref{prop22}, 
$[\fC_\lambda]_A$ has a standard basis $\{e_{d(\fs)\sigma_\lambda a_l}
\mid \fs \in \bT(\lambda)\}$ where $e_{d(\fs)\sigma_\lambda a_l}$ 
denotes the class modulo $\hat{\fI}_\lambda$ of the element
$C_{d(\fs) \sigma_\lambda a_l} \in \fI_\lambda$. So, for any $\ft\in 
\bT(\lambda)$, we write 
\[ \varphi_\lambda(x_\ft)=\sum_{\fs\in \bT(\lambda)} g_{\fs,\ft}\, 
e_{d(\fs)\sigma_\lambda a_l} \quad \mbox{where}\quad g_{\fs,\ft} \in A.\]
Thus, $G_\lambda:=\bigl(g_{\fs,\ft}\bigr)_{\fs,\ft \in \bT(\lambda)}$ is 
the matrix of $\varphi_\lambda$ with respect to the standard bases of 
$\tilde{S}^\lambda$ and $[\fC_\lambda]_A$, respectively.  Now we can 
state the main result of this paper.

\begin{thm} \label{thm1} The map $\varphi_\lambda \colon \tilde{S}^\lambda
\rightarrow [\fC_\lambda]_A$ constructed in Lemma~~\ref{const1} is an
isomorphism. More precisely, the following hold. For any $\fs,\ft \in 
\bT(\lambda)$, we have 
\begin{align*}
g_{\ft,\ft} &=1 \qquad \mbox{for all $\ft \in \bT(\lambda)$},\\
g_{\fs,\ft} &=0 \qquad \mbox{unless $d(\fs)\leq d(\ft)$},\\
g_{\fs,\ft} &\in v^{-1}{\Z}[v^{-1}] \qquad \mbox{if $\fs\neq \ft$};  
\end{align*}
here, $\leq$ denotes the Bruhat--Chevalley order. Thus, the matrix 
$G_\lambda$ has an upper unitriangular shape for a suitable ordering of 
the set $\bT(\lambda)$. 
\end{thm}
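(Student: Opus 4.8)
The plan is to compute the matrix $G_\lambda$ entrywise by identifying $\varphi_\lambda(x_\ft)$ explicitly in the Kazhdan--Lusztig basis. Since $\varphi_\lambda$ is $\bH_n$-linear and $x_\ft$ is the class of $T_{d(\ft)}\,x_\lambda$ in $\tilde{S}^\lambda$ (because $d(\ft^\lambda)=1$, so that $x_{\ft\ft^\lambda}=T_{d(\ft)}x_\lambda$), Lemma~\ref{const1} gives $\varphi_\lambda(x_\ft)$ as the class of $T_{d(\ft)}\,C_{\sigma_\lambda a_l}$ in $[\fC_\lambda]_A$. Write $w_0:=\sigma_\lambda a_l$. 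As $\fI_\lambda$ is a left ideal containing $C_{w_0}$, the product $T_{d(\ft)}C_{w_0}$ lies in $\fI_\lambda$; expanding it in the $C$-basis and reducing modulo $\hat{\fI}_\lambda$ therefore shows that $g_{\fs,\ft}$ is precisely the coefficient of $C_{d(\fs)w_0}$ in $T_{d(\ft)}C_{w_0}$ (all terms $C_u$ with $u\leq_\cL w_0$ but $u\notin\fC_\lambda$ vanish in the quotient). Thus the whole theorem reduces to understanding these coefficients.

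First I would settle the diagonal entries and the Bruhat triangularity. I would prove, by induction on $\ell(w)$, the following statement for every $w\in W_n$ with $\ell(ww_0)=\ell(w)+\ell(w_0)$:
\[ T_w\,C_{w_0}=C_{ww_0}+\sum_{u<ww_0} c_u\,C_u \qquad (c_u\in A). \]
For the inductive step write $w=sw'$ with $\ell(w)=\ell(w')+1$; the hypothesis $\ell(ww_0)=\ell(w)+\ell(w_0)$ forces $\ell(w'w_0)=\ell(w')+\ell(w_0)$ and $s(w'w_0)>w'w_0$, so one applies the explicit rule for $T_s\,C_u$ recalled in \S\ref{basisdef} to $T_{w'}C_{w_0}=C_{w'w_0}+\cdots$, noting that $C_{w'w_0}$ produces the leading term $C_{sw'w_0}=C_{ww_0}$ with coefficient $1$ and that the lifting property of the Bruhat order keeps every other $C_u$ below $ww_0$. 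Applying this with $w=d(\ft)$ (length-additivity is guaranteed by Proposition~\ref{prop22}) gives $g_{\ft,\ft}=1$ and $g_{\fs,\ft}\neq 0\Rightarrow d(\fs)w_0\leq d(\ft)w_0$; since both $d(\fs)w_0$ and $d(\ft)w_0$ have length-additive decompositions over the fixed element $w_0$, this yields $d(\fs)\leq d(\ft)$.

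The substantive point is the refinement $g_{\fs,\ft}\in v^{-1}\Z[v^{-1}]$ for $\fs\neq\ft$, which asserts two things at once: that the indeterminate $V$ does not occur, and that only strictly negative powers of $v$ occur. Here I would factor $C_{w_0}=C_{\sigma_\lambda}\,C_{a_l}$ (Bonnaf\'e \cite[Prop.~2.3]{BI2}) and write $T_{d(\ft)}C_{w_0}=(T_{d(\ft)}C_{\sigma_\lambda})\,C_{a_l}$. Since $d(\ft),\sigma_\lambda\in\fS_n$ and $\fS_n$ is closed downwards in the Bruhat order, for $w\in\fS_n$ the element $C_w$ coincides with the Kazhdan--Lusztig basis element of the parabolic subalgebra $\bH(\fS_n)$ of type $A_{n-1}$ with the single parameter $v$ (uniqueness, using that the bar-involution restricts). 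Hence $T_{d(\ft)}C_{\sigma_\lambda}$ is an entirely type-$A$ computation with coefficients in $\Z[v^{\pm1}]$, and by \cite[Lemma~3.3]{McPa} (extended to the direct product $\fS_{\{1,\ldots,l\}}\times\fS_{\{l+1,\ldots,n\}}$, exactly as in the proof of Proposition~\ref{prop22}) its expansion is upper unitriangular with off-diagonal coefficients in $v^{-1}\Z[v^{-1}]$. It then remains to multiply by $C_{a_l}$ on the right and to project onto $\fC_\lambda$: using the compatibility of right multiplication by $C_{a_l}$ with the Clifford normal form (Example~\ref{exp1} together with \cite[\S7]{BI} and \cite[Prop.~2.3]{BI2}) one identifies the $C_{d(\fs)w_0}$-coefficient with the corresponding type-$A$ coefficient and checks that the remaining terms land in $\hat{\fI}_\lambda$. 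The absence of $V$ is then forced by the $t$-length grading of the asymptotic case: every element of $\fC_\lambda$ has $t$-length $l$ while $d(\ft)\in\fS_n$ has $t$-length $0$, and in the asymptotic case the structure constants $h_{x,y,z}$ carry exactly the power of $V$ dictated by the $t$-lengths, so that those relevant here (with $\ell_t(z)=\ell_t(x)+\ell_t(y)=l$) are pure in $v$ (\cite{BI}, \cite{Ge05}).

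The main obstacle is exactly this last paragraph. The induction of the second paragraph is routine and gives the unitriangular shape cheaply, but it offers no control over the entries beyond membership in $A$; in particular it cannot see that $V$ disappears. The real work is to show that, after the harmless type-$A$ reduction, right multiplication by $C_{a_l}$ neither introduces powers of $V$ nor creates spurious contributions to $\fC_\lambda$ --- and this is precisely where the fine combinatorics of the asymptotic case (constancy of $t$-length on cells and the behaviour of $C_{a_l}$ on the Clifford normal form, from \cite{BI}, \cite{BI2}, \cite{Ge05}) must be brought to bear.
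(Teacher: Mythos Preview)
Your overall strategy coincides with the paper's: reduce to a type-$A$ computation $T_{d(\ft)}C_{\sigma_\lambda}$ inside $\fS_n$, then pass to $W_n$ by right-multiplying with $C_{a_l}$ using Bonnaf\'e's identity $C_\sigma C_{a_l}=C_{\sigma a_l}$ for all $\sigma\in\fS_n$. Two points of comparison are worth noting.

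First, for the unitriangularity you run an induction directly in $W_n$; the paper instead stays in $\fS_n$, writing $T_{d(\ft)}C_{\sigma_\lambda}=\sum_{\fs\in\bT^r(\lambda)}g'_{\fs,\ft}C_{d(\fs)\sigma_\lambda}$ (the index set is all of $\bT^r(\lambda)$, via $x\leq_{\cL,n}\sigma_\lambda\Rightarrow x=d(\fs)\sigma_\lambda$, cf.\ \cite[2.9]{McPa}), and only then multiplies by $C_{a_l}$. Because $C_\sigma C_{a_l}=C_{\sigma a_l}$ holds for \emph{every} $\sigma\in\fS_n$, this multiplication is an exact equality of $C$-basis expansions, not merely a congruence modulo $\hat{\fI}_\lambda$; this is what makes the passage from $\fS_n$ to $W_n$ effortless and dissolves the ``main obstacle'' you flag in your last paragraph. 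Your $t$-length/grading argument for the absence of $V$ is then unnecessary.

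Second, and more substantively, your justification of the degree bound $g_{\fs,\ft}\in v^{-1}\Z[v^{-1}]$ appeals to \cite[Lemma~3.3]{McPa}. In this paper that lemma is invoked only for the combinatorial description of the left cell containing $\sigma_\lambda$ (see the proof of Proposition~\ref{prop22}), not for any bound on coefficients. The paper obtains the degree bound by a different device: the unitriangular system $(*)$ over $\bT^r(\lambda)$ is inverted, and the inverse entries are identified with the relative Kazhdan--Lusztig polynomials $p^*_{d(\fs)\sigma_\lambda,\,d(\ft)\sigma_\lambda}$ via \cite[Prop.~3.3]{Geind}; these lie in $v^{-1}\Z[v^{-1}]$ by construction, whence the same holds for the $g'_{\fs,\ft}$. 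So your outline is on the right track, but the specific citation you give for the crucial step does not carry the weight you place on it; the inversion-plus-\cite{Geind} argument is what actually closes the gap.
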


\begin{proof}
We begin with the following computation inside the parabolic 
subgroup $\fS_n \subseteq W_n$. Let $\ft\in \bT^r(\lambda)$. By the 
multiplication rules for the Kazhdan--Lusztig basis, $T_{d(\ft)} 
C_{\sigma_\lambda}$ equals $C_{d(\ft) \sigma_\lambda}$ plus a 
${\Z}[v,v^{-1}]$-linear combination of terms $C_x$ where $x \in \fS_n$, 
$x\leq_{\cL,n} \sigma_\lambda$ and $x<d(\ft) \sigma_\lambda$. Now, the 
condition $x \leq_{\cL,n} \sigma_\lambda$ implies that $x$ can be written 
as $x=d(\fs) \sigma_\lambda$ for some $\fs \in \bT^r(\lambda)$ (see, for 
example, \cite[2.9]{McPa}). Then the condition $x=d(\fs)\sigma_\lambda<
d(\ft)\sigma_\lambda$ implies that $d(\fs)<d(\ft)$ (see 
\cite[9.10(f)]{Lusztig03}). Thus, we obtain
\begin{equation*}
T_{d(\ft)}\,C_{\sigma_\lambda}=\sum_{\fs\in \bT^r(\lambda)} 
g_{\fs,\ft}' \, C_{d(\fs)\sigma_\lambda}\qquad \mbox{for any $\ft \in 
\bT^r(\lambda)$},\tag{$*$}
\end{equation*}
where $g_{\fs,\ft}'\in {\Z}[v,v^{-1}]$ for all $\fs,\ft \in \bT^r(\lambda)$;
furthermore, $g_{\ft,\ft}'=1$ and $g_{\fs,\ft}'=0$ unless $d(\fs) 
\leq d(\ft)$ and $d(\fs)\sigma_\lambda \leq_{\cL,n} d(\ft)\sigma_\lambda$. 

To pass from $\fS_n$ to $W_n$, we use the following argument. First note
that $a_l$ is a distinguished right coset representative of $\fS_n$ in $W_n$.
By \cite[Prop.~2.3]{BI2}, we have $C_{\sigma a_l}=C_{\sigma} C_{a_l}$ for 
any $\sigma \in\fS_n$. Hence, multiplying ($*$) on the right by $C_{a_l}$, 
we obtain
\[ T_{d(\ft)}\,C_{\sigma_\lambda a_l}=\sum_{\fs\in \bT^r(\lambda)} 
g_{\fs,\ft}' \, C_{d(\fs)\sigma_\lambda a_l}\qquad \mbox{for any $\ft \in 
\bT^r(\lambda)$}.\]
Now assume that $\ft \in \bT(\lambda)$. Let $s \in \bT^r(\lambda)$ be such 
that $g_{\fs,\ft}'\neq 0$. Then $d(\fs)\sigma_\lambda \leq_{\cL,n}
d(\ft)\sigma_{\lambda}$ and so $d(\fs)\sigma_\lambda a_l \leq_{\cL}
d(\ft)\sigma_\lambda a_l$; see \cite[Prop.~9.11]{Lusztig03}. 
Hence, using Proposition~\ref{prop22}, we find that 
\[ T_{d(\ft)}\,C_{\sigma_\lambda a_l}\equiv \sum_{\fs\in \bT(\lambda)} 
g_{\fs,\ft}' \, C_{d(\fs)\sigma_\lambda a_l}\quad \bmod \quad 
\hat{\fI}_\lambda.\]
Passing to the quotient $\fI_\lambda \rightarrow [\fC_\lambda]_A=
\fI_\lambda/\hat{\fI}_\lambda$, we obtain
\[T_{d(\ft)}.e_{\sigma_\lambda a_l}=\sum_{\fs\in \bT(\lambda)} 
g_{\fs,\ft}' \, e_{d(\fs)\,\sigma_\lambda a_l}\qquad \mbox{for any
$\ft \in \bT(\lambda)$}.\]
Now note that $\varphi_\lambda(x_{\ft})=\varphi_\lambda(
T_{d(\ft)}.\bar{x}_\lambda)=T_{d(\ft)}.\varphi_\lambda(\bar{x}_\lambda)=
T_{d(\ft)}.e_{\sigma_\lambda a_l}$, where $\bar{x}_\lambda$ denotes 
the class of $x_\lambda$ in $\tilde{S}^\lambda$. Thus, we see that 
$g_{\fs,\ft}'=g_{\fs,\ft}$ for all $\fs,\ft \in \bT(\lambda)$. 
Consequently, the coefficients $g_{\fs,\ft}$ have the property that
$g_{\ft,\ft}=1$ and $g_{\fs,\ft}=0$ unless $d(\fs)\leq d(\ft)$. Hence,
for a suitable ordering of the rows and columns, the matrix $G_\lambda$ 
is unitriangular and $\varphi_\lambda$ is an isomorphism.  

It remains to prove that $g_{\fs,\ft}\in v^{-1}{\Z}[v^{-1}]$ for $\fs \neq 
\ft$. We will actually show that $g_{\fs,\ft}'\in v^{-1}{\Z}[v^{-1}]$
for all $\fs,\ft \in \bT^r(\lambda)$ such that $\fs\neq \ft$. This is 
seen as follows. We can invert the equations ($*$) and obtain
\[ C_{d(\ft)\,\sigma_\lambda}=\sum_{\fs\in \bT^r(\lambda)} 
\tilde{g}_{\fs,\ft} \, T_{d(\fs)}\, C_{\sigma_\lambda}\qquad 
\mbox{for any $\ft \in \bT^r(\lambda)$},\]
where the $\tilde{g}_{\fs,\ft}$'s are the entries of the inverse
of the matrix $\bigl(g_{\fs,\ft}'\bigr)_{\fs,\ft \in \bT^r(\lambda)}$. A
comparison with \cite[Prop.~3.3]{Geind} shows that 
\[ \tilde{g}_{\fs,\ft}=p^*_{d(\fs)\sigma_\lambda,d(\ft)\sigma_\lambda}
\in v^{-1}{\Z}[v^{-1}]\qquad \mbox{if $\fs\neq \ft$}.\]
Hence we also have $g_{\fs,\ft}'\in v^{-1}\Z[v^{-1}]$ for $\fs\neq \ft$.
\end{proof}

\begin{rem} \label{rem2} 
Let $\lambda \in \Lambda_n$ and $\fC$ be any left cell such that $\fC$ 
and $\fC_\lambda$ are contained in the same two-sided cell. Then, by 
Theorem~\ref{equal}, $[\fC]_A$ and $[\fC_\lambda]_A$ are canonically 
isomorphic as $\bH_n$-modules. Hence, in combination with 
Theorem~\ref{thm1}, we conclude that $\tilde{S}^\lambda \cong [\fC]_A$. 
Thus, any left cell module of $\bH_n$ is canonically isomorphic to a 
Specht module.
\end{rem}

\begin{rem} \label{rem3} 
The above results also hold for specialized algebras. More precisely, 
let $R$ be any commutative ring (with $1$) and fix two invertible elements
$Q,q\in R$ which admit square roots $Q^{1/2}$ and $q^{1/2}$ in $R$. Then 
we have a unique ring homomorphism $\theta \colon A\rightarrow R$ such 
that $\theta(V)=Q^{1/2}$ and $\theta(v)=q^{1/2}$. We can extend scalars 
from $A$ to $R$ and set
\[ \bH_{n,R}=R \otimes_A \bH_n,\quad \tilde{S}_R^\lambda=R \otimes_A
\tilde{S}^\lambda, \quad [\fC]_R=R\otimes_A [\fC]_A,\]
for any $\lambda \in \Lambda_n$ and any left cell $\fC$ of $W_n$. 
Then $\tilde{S}_R^\lambda$ precisely is the Specht module of the
algebra of type $B_n$ with parameters $Q,q$, as considered by 
Dipper--James--Murphy \cite{DJM}. By Theorem~\ref{thm1} and 
Remark~\ref{rem2}, we have an induced canonical isomorphism 
$\tilde{\varphi}_R \colon \tilde{S}^\lambda_R 
\stackrel{\sim}{\longrightarrow} [\fC]_R$ whenever $\fC$ is in the same
two-sided cell as $\fC_\lambda$.
\end{rem}

\section{Counterexample} \label{sec:not}

Recall that $\bH_n$ is defined over the ring of Laurent polynomials 
$A={\Z}[V^{\pm 1},v^{\pm 1}]$ in two independent indeterminates. In the
previous sections, we considered the Kazhdan--Lusztig cell modules of $\bH_n$
with respect to the ``asymptotic case'' \cite{BI}, that is, assuming that 
the group of monomials $\{V^iv^j \mid i,j \in \Z\}$ is endowed with the
pure lexicographic order where $V^iv^j<v^{i'}v^{j'}$ if and only if $i<i'$ 
or $i=i'$ and $j<j'$. But there are many other monomial orders, each giving 
rise to a Kazhdan--Lusztig basis  of $\bH_n$ and corresponding cell modules. 

The aim of this section is to show that, in general, the Dipper--James--Murphy 
Specht modules $\tilde{S}^\lambda$ cannot be identified with cell modules 
for these other choices of a monomial order. We do this in two ways: (1) by 
a concrete example in type $B_3$ and (2) by a general argument involving
non-semisimple specialisations of $\bH_n$.

\subsection{An example in type $B_3$} \label{expb3} $\mbox{}$

Let $n=3$; then $W_3=\langle t,s_1,s_2\rangle$. Let $\lambda=((1), (2))
\in \Lambda_3$ and consider the corresponding Specht module 
$\tilde{S}^\lambda$.  By Theorem~\ref{thm1}, it is isomorphic to
$[\fC_\lambda]_A$, where $\fC_\lambda$ is a left cell with respect
to the ``asymptotic case''. We have $l=1$ and $\sigma_\lambda a_l=s_2t$. 
Using Proposition~\ref{prop22}, we find that 
\[ \fC_\lambda=\{ s_2t,\quad s_1s_2t,\quad s_2s_1s_2t\}.\]
The corresponding left cell representation $\rho_\lambda \colon \bH_3 
\rightarrow M_3(A)$ is given by 
\begin{gather*} 
T_t \mapsto \begin{pmatrix} V & Vv^{-1}V^{-1}v & Vv^{-2}+V^{-1}v^2\\
0 & -V^{-1} & 0 \\ 0 & 0 & -V^{-1}\end{pmatrix},\\
T_{s_1} \mapsto \begin{pmatrix} -v^{-1} & 0 & 0 \\ 1 & v & 0 \\
0 & 0 & v\end{pmatrix}, \qquad T_{s_2} \mapsto \begin{pmatrix} 
v &  1 & 0 \\ 0 & -v^{-1} & 0 \\ 0 & 1 & v \end{pmatrix}.
\end{gather*} 
Now let us choose a different monomial order on $\{V^iv^j\mid i,j \in \Z\}$, 
namely, the weighted lexicographic order where
\[ V^iv^j <V^{i'}v^{j'}\quad \stackrel{\text{def}}{\Longleftrightarrow} 
\quad i+j<i'+j' \quad \mbox{or} \quad  i+j=i'+j' \mbox{ and } i<i'.\] 
(In particular, we have $v<V<v^2$.)
By an explicit computation, one can show that, in this case, the 
Kazhdan--Lusztig cell modules are all irreducible over $K$ (the field of 
fractions of $A$), in accordance with \cite[Conjecture~A$^+$]{BGIL}. (We are
in the case $r=1$ of that conjecture.) Furthermore, there are precisely 
three left cells $\fC_1, \fC_2,\fC_3$ such that $[\fC_i]_K \cong 
\tilde{S}^\lambda_K$; they are given as follows:
\begin{align*}
\fC_1 &=\{s_1s_2s_1,\quad s_1ts_1s_2s_1,\quad ts_1s_2s_1\},\\
\fC_2 &=\{ s_1s_2s_1t, \quad s_1ts_1s_2s_1t, \quad ts_1s_2s_1t \},\\
\fC_3 &=\{ s_1s_2s_1ts_1,\quad s_1ts_1s_2s_1ts_1,\quad ts_1s_2s_1ts_1 \}.
\end{align*}
The corresponding left cell representations are all identical and given 
by:
\begin{gather*}
T_t \mapsto \begin{pmatrix} -V^{-1} & 0 & 0 \\ 0 & -V^{-1} & 0 \\
1 & Vv^{-1}+V^{-1}v & V \end{pmatrix},\\
T_{s_1} \mapsto \begin{pmatrix} v & 0 & 0 \\ 0 & v & 1 \\ 0 & 0  & -v^{-1}
\end{pmatrix}, \quad T_{s_2} \mapsto \begin{pmatrix} v &Vv^{-2}+V^{-1}v^2 &
0 \\ 0 & -v^{-1} & 0 \\ 0 & 1 & v \end{pmatrix}.
\end{gather*}
Denote this representation by $\rho\colon \bH_3 \rightarrow M_3(A)$. Now 
one checks that $P\rho_\lambda(T_s)=\rho(T_s)P$ for $s\in \{t,s_1,s_2\}$ 
where
\[ P=\begin{pmatrix} 0 & 0 & Vv^{-2}+V^{-1}v^2 \\ 0 & 1 & 0 \\ 1 & 0 & 0 
\end{pmatrix}.\]
Thus, $P$ defines a non-trivial module homomorphism between $\rho_\lambda$
and $\rho$. Since these representations are irreducible over $K$, the matrix
$P$ is uniquely determined up to scalar multiples. But we see that there 
is no scalar $\lambda \in K$ such that $\lambda P \in M_3(A)$ and 
$\det(\lambda P)\in A^\times$. Hence, $\tilde{S}^\lambda$ will not be 
isomorphic to any Kazhdan--Lusztig cell module with respect to the above 
weighted lexicographic order.

\subsection{General cell modules} \label{canbas} $\mbox{}$

Let $k$ be a field and fix an element $\xi\in k^\times$. Let $a,b\in 
\Z_{\geq 0}$ and consider the specialisation $A \rightarrow k$ such that 
$V \mapsto \xi^b$ and $v \mapsto \xi^a$. Let $\bH_{n,k}=k \otimes_A \bH_n$ 
be the corresponding specialized algebra. As in Remark~\ref{rem3}, we also 
have corresponding Specht modules $\tilde{S}^\lambda_k$ for $\bH_{n,k}$. 
Now, for each $\lambda \in \Lambda_n$, there is a certain 
$\bH_{n,k}$-invariant bilinear form $\phi_\lambda \colon \tilde{S}_k^\lambda 
\times \tilde{S}_k^\lambda \rightarrow k$; see \cite[\S 5]{DJM}. Let 
$\mbox{rad}(\phi_\lambda)$ be the radical of that form and set $D^\lambda=
\tilde{S}^\lambda_k/ \mbox{rad}(\phi_\lambda)$. Then $D^\lambda$ is either
$0$ or an absolutely irreducible $\bH_{n,k}$-module; furthermore, we have 
\[ \Irr(\bH_{n,k})=\{D^\mu \mid \mu\in \Lambda^\clubsuit\} \quad \mbox{where} 
\quad \Lambda^\clubsuit=\{\lambda\in \Lambda\mid D^\lambda \neq 0\};\]
see Dipper--James--Murphy \cite[Theorem~6.6]{DJM}. The conjecture in
\cite[8.13]{DJM} about an explicit combinatorial description of 
$\Lambda^\clubsuit$ has recently been proved by Ariki--Jacon \cite{ArJa}.

Now consider the Kazhdan--Lusztig basis $\{C_w\}$ of $\bH_{n,k}$ 
with respect to the weight function $L \colon W_n \rightarrow \Z$
such that $L(t)=b$ and $L(s_i)=a$ for all~$i$. Assume that Lusztig's 
conjectures {\bf (P1)}--{\bf (P15)} in \cite[14.2]{Lusztig03} on 
Hecke algebras with unequal parameters hold. (This is the case, for 
example, in the ``equal parameter case'' where $a=b$; see 
\cite[Chap.~15]{Lusztig03}.) Using these properties, it is shown in 
\cite{mycell} that $\bH_{n,k}$ has a natural ``cellular structure'' in the 
sense of Graham--Lehrer \cite{GrLe}. The elements of the ``cellular basis''
are certain linear combinations of the basis elements $\{C_w\}$. Then,
by the general theory of ``cellular algebras'', for any $\lambda \in
\Lambda_n$, we have a  ``cell module'' $W_k(\lambda)$ for $\bH_{n,k}$ 
and this cell module is naturally equipped with an $\bH_{n,k}$-invariant
bilinear form $g_\lambda\colon W_k(\lambda) \times W_k(\lambda) 
\rightarrow k$. Let $\mbox{rad}(g_\lambda)$ be the radical of that form 
and set $L^\lambda=W_k(\lambda)/ \mbox{rad}(g_\lambda)$. Then, again, 
$L^\lambda$ is either $0$ or an absolutely irreducible 
$\bH_{n,k}$-module; furthermore, we have
\[ \Irr(\bH_{n,k})=\{L^\mu \mid \mu\in \Lambda^\spadesuit\} \quad \mbox{where} 
\quad \Lambda^\spadesuit=\{\lambda\in \Lambda\mid L^\lambda \neq 0\};\]
see Graham--Lehrer \cite[\S 3]{GrLe} and \cite[Example~4.4]{mycell}. 

Given these two settings, it is natural to ask if $\tilde{S}_k^\lambda\cong 
W_k(\lambda)$ and, subsequently, if $\Lambda^\clubsuit=\Lambda^\spadesuit$~?
In the case where $\bH_{n,k}$ is semisimple, it is shown in 
\cite[Example~4.4]{mycell} that $\tilde{S}_k^\lambda \cong W_k(\lambda)$; 
furthermore, by the general theory of ``cellular algebras'' \cite{GrLe} 
and the results in \cite{DJM}, we have $\Lambda^\clubsuit=\Lambda^\spadesuit=
\Lambda$ in this case. However, if $\bH_{n,k}$ is not semisimple, then the 
answer to these questions is negative, as can be seen from the fact that
$\Lambda^\clubsuit\neq \Lambda^\spadesuit$ in general; see \cite{GeJa} 
and the references there.

By \cite[Theorem~2.8]{GeJa} it is true, however, that $\Lambda^\clubsuit=
\Lambda^\spadesuit$ if $b>(n-1)a>0$ which corresponds precisely to the
``asymptotic case'' discussed in this paper. Indeed, by 
\cite[Corollary~6.3]{Ge05}, the basis $\{C_w\}$ of $\bH_{n,k}$ is cellular 
under this assumption on $a,b$, and by Theorem~\ref{thm1}, we have 
$W_k(\lambda) \cong \tilde{S}_k^\lambda$.

\medskip
{\small \noindent {\bf Acknowledgements.} The final form of this paper grew
out of several discussions which the three authors could hold thanks to the
hospitality of various institutions. Part of this work was done while all
three authors enjoyed the hospitality of the Bernoulli Center at the
EPFL Lausanne (Switzerland) in  2005. C.P. thanks the University of 
Aberdeen (Scotland) for an invitation in November 2006; M.G. and L.I.
would like to thank the University of Cyprus at Nicosia for an invitation
in March 2007.}


\end{document}